\algnewcommand{\LeftComment}[1]{\Statex \(\triangleright\) #1}
\algrenewcommand{\algorithmiccomment}[1]{\hfill// #1}
\newenvironment{breakablealgorithm}
  {
   \begin{center}
     \refstepcounter{algorithm}
     \hrule height.8pt depth0pt \kern2pt
     \renewcommand{\caption}[2][\relax]{
       {\raggedright\textbf{\fname@algorithm~\thealgorithm} ##2\par}%
       \ifx\relax##1\relax 
         \addcontentsline{loa}{algorithm}{\protect\numberline{\thealgorithm}##2}%
       \else 
         \addcontentsline{loa}{algorithm}{\protect\numberline{\thealgorithm}##1}%
       \fi
       \kern2pt\hrule\kern2pt
     }
  }{
     \kern2pt\hrule\relax
   \end{center}
  }
\renewcommand*\env@matrix[1][*\c@MaxMatrixCols c]{%
\hskip -\arraycolsep
\let\@ifnextchar\new@ifnextchar
\array{#1}}
\begin{document}

\setcounter{page}{91}
\publyear{22}
\papernumber{2154}
\volume{189}
\issue{2}

  \finalVersionForARXIV

\title{Error Correction for Discrete Tomography}

\author{Matthew Ceko\thanks{Address  for correspondence: School of Physics
                                         and Astronomy Monash University, Melbourne, Australia}
\\
School of Physics and Astronomy Monash University, Melbourne, Australia\\
matthew.ceko@gmail.com
\and
Lajos Hajdu\\
Institute of Mathematics, University of Debrecen,  Debrecen,  Hungary\\
hajdul@science.unideb.hu
\and
Rob Tijdeman\\
Mathematical Institute, Leiden University, Leiden, The Netherlands\\
tijdeman@ziggo.nl
}

\runninghead{M. Ceko and all.}{Error Correction for Discrete Tomography}

\maketitle

\vspace{-10mm}
\hfill{\footnotesize\sl Dedicated to the memory of Carla Peri.}

\vspace*{-4mm}
\begin{abstract}
Discrete tomography focuses on the reconstruction of functions from their line sums in a finite number $d$ of directions. In this paper we consider functions $f: A \to R$ where $A$ is a finite subset of $\mathbb{Z}^2$ and $R$ an integral domain.
Several reconstruction methods have been introduced in the literature. Recently Ceko, Pagani and Tijdeman developed a fast method to reconstruct a function with the same line sums as $f$. Up to here we assumed that the line sums are exact. Some authors have developed methods to recover the function $f$ under suitable conditions by using the redundancy of data. In this paper we investigate the case where a small number of line sums are incorrect as may happen when discrete tomography is applied for data storage or transmission. We show how less than $d/2$ errors can be corrected and that this bound is the best possible. Moreover, we prove that if it is known that the line sums in $k$ given directions are correct, then the line sums in every other direction can be corrected provided that the number of wrong line sums in that direction is less than $k/2$.
\end{abstract}

\begin{keywords}
 discrete tomography, error correction, line sums, polynomial-time algorithm, Vandermonde determinant.
\textbf{Mathematics Subject Classification:} {\sl Primary 94A08} {\sl Secondary 15A06}
\end{keywords}

\section{Introduction}
\label{intro}

We consider functions $f: A \to R$ where $A= \{(i,j) \in \mathbb{Z}^2: 0 \leq i <m, 0 \leq j <n\}$ for given positive integers $m,n$ and integral domain $R$, e.g. $\mathbb{Z}, \mathbb{R}$ or a finite field. We assume that $f$ is unknown, but that its line sums in a positive number $d$ of directions are given. The line sums are often referred to as X-rays to highlight the link between discrete tomography and computed tomography scans.
This type of discrete tomography problem has been widely studied, see  e.g. \cite{ag19, dart, cpst, dgci2016, gg, tenyears, ht01, her, hk07, sb}. Discrete tomography originated in the study of crystals which may be damaged when many X-rays are used \cite{fishepp, sf}. In such applications the possible values of the line sums are linear combinations of a finite set of positive real numbers. Later applications of discrete tomography were developed where the co-domain of $f$ can be chosen, such as distributed storage \cite{bgh, png}, watermarking \cite{ac, uga}, image compression \cite{ka08} and erasure coding \cite{csgkn, nspk, pinp}.

It makes an essential difference whether the line sums are exact or not. If they are exact then there is at least one function satisfying the line sums, but there may be infinitely many.
In 1978 Katz \cite{ka77} gave a necessary and sufficient condition for the uniqueness of the solution. The structure of possible solutions of a discrete tomography problem has been studied by numerous authors. It turns out that proving the existence of a solution, and in case of existence a subsequent reconstruction, can be very hard if the range of the function on $A$ is restricted to a fixed finite set. In 1991 Fishburn, Lagarias, Reeds and Shepp \cite{fishepp} gave necessary and sufficient conditions for uniqueness of reconstruction of functions $f: A \to \{1,2, \dots, N\}$ for some positive integer $N>1$. See also the thesis of Wiegelmann \cite{wie}. In 1999 and 2000 Gardner, Gritzmann and Prangenberg \cite{ggp99, ggp00} showed under very general conditions that proving the existence or uniqueness of a function $f: A \to \mathbb{N}$ from its line sums in $d$ directions is NP-complete. The crux of the NP-results is that the co-domain is not closed under subtraction.

If the co-domain $R$ is an integral domain, then linear algebra techniques such as Gauss elimination provide polynomial time algorithms. This is useful for the practical reconstruction of $f$, see e.g. Batenburg and Sijbers \cite{dart}. In the present paper we investigate the theoretical structure of solutions and leave such computational techniques aside. If the solution is not unique, then any two solutions differ by a so-called ghost, a nontrivial  function $g: A \to \mathbb{R}$ for which all the line sums in the $d$ directions vanish. In 2001 Hajdu and Tijdeman \cite{ht01} gave an explicit algebraic expression for the ghost of minimal size and showed that every ghost is a linear combination of shifts of it. It implies that arbitrary function values can be given to a certain set of points of $A$ and that thereafter the function values of the other points of $A$ are uniquely determined by the line sums, see Dulio and Pagani \cite{dupa}.

Suppose $A$ is an $m$ by $n$ grid and the line sums of a function $f: A \to R$ in $d$ directions are known. Recently a method was developed to construct a function $g: A \to R$ which has the same line sums as $f$ has in time linear in $dmn$ as to the number of operations such as addition and multiplication. This development started with four papers of Dulio, Frosini and Pagani \cite{urrpdt, deda, dgci2016, 3dirext} with fast reconstruction results for corner regions of $A$ in case $d=2$ or $3$. Subsequently Pagani and Tijdeman \cite{pt19} did so for general $d$. In particular, their approach enables one to reconstruct $f$, if $f$ is the only function which satisfies the line sums in the $d$ directions. Finally Ceko, Pagani and Tijdeman \cite{cst} developed an algorithm to construct a function $g: A \to R$ in time linear in $dmn$ such that $g$ has the same line sums as $f$. This yields a parameter representation of all the functions $g: A \to R$ which have the same line sums as $f$.
We think it is unlikely that there exists a general reconstruction method which requires essentially less than $\mathcal{O}(dmn)$ operations, if the solution is unique.

A remaining problem is to construct the most likely consistent set of line sums if the measured line sums contain errors. The most common cause of inconsistency of line sums is noise. This happens, for example, if the line sums are approximations of line integrals. Here the assumption is that many line sums may not be exact, but that for each line sum the difference between measured and actual sum is small. Many algorithms have been developed to deal with this situation in case $R = \mathbb{R}$, often approximation methods which work well in practice but do not guarantee optimality. See for example Parts 2 of the books edited by Herman and Kuba \cite{hk99, hk07} and the paper by Batenburg and \linebreak Sijbers \cite{dart}.

The consistent set of line sums nearest to the measured line sums in case of $R=\mathbb R$ can be constructed as follows. Consider the incidence matrix $B$ of the lines in the $d$ given directions and intersecting $A$. This is an $L$ by $mn$ matrix where $L$ is the total number of such lines. Then the range of $B$ forms a subspace in $\mathbb R^L$. By constructing the orthogonal projection of the vector of the measured line sums onto this subspace, we obtain the consistent line sum vector $b_0$ nearest to the measured one. Theorem 5.5.1 of \cite{gl} provides the standard tool from linear algebra to compute the vector $b_0$. For more details on this procedure see Theorem 4.1 of \cite{ht13}.

In the present paper we deal with another type of errors, viz. errors which may be arbitrary large, but are small in number. For literature in this direction, see e.g. \cite{bgh, csgkn, nspk, png}. Again the properties of the co-domain of $f$ make an essential difference. Alpers and Gritzmann \cite{ag06} showed that for functions $f: A \to \{0,1\}$ the Hamming distance between any two solutions with equal cardinality of the lattice sets is $2(d-1)$. They remarked that the problem of determining how the individual measurements should be corrected in order to provide consistency of the data is NP-complete whenever $d \geq 3$. The situation is again totally different and much more favourable if the co-domain of $f$ is an integral domain.

A common approach for error correction in linear systems involves solving an $L_1$ minimization problem \cite{don, wri}. Candes, Romberg and Tao showed that an object may be recovered exactly from incomplete frequency samples via convex optimization \cite{can}. Chandra, Svalbe, Gu\'edon, Kingston and Normand \cite{csgkn} used redundant image regions to reconstruct the original function $f$ in linear time. They chose $m,n$ and the directions appropriately and assumed that all the line sums in some directions were wrong. This result is comparable with our Theorem \ref{onedir} which, however, is valid for any $m,n$ and any finite set of directions. In Theorem \ref{mainth} we show that if $f: A \to R$ and $g: A \to R$ do not have the same line sums, then at least $d$ line sums are different and in case of $<d/2$ wrong line sums reconstruction of $f$ can be done in polynomial time. (Even $<d$ wrong line sums if it is known which line sums are wrong). A simple example shows that here the bounds $d$ and $d/2$, respectively, are the best possible (see Section \ref{sec3}). After having corrected the wrong line sums, we have a consistent set of line sums and the method from \cite{cst, pt19} can be applied to find the optimal solution in the above described sense.

Theorems \ref{mainth} and \ref{onedir} are stated in the next section. Moreover, in Sections 9-11 we give a pseudocode algorithm and an example, and prove that the complexity is $\mathcal{O}(d^4mn)$ operations. Of course, it may happen that the number of errors is much smaller than $d/2$. We introduce $F$ for the total number of errors  and $G$ for the maximum of the number of wrong line sums in some direction as parameters so that the amount of computation can be reduced if it is expected that there are far less than $d/2$ errors. Here $F$ and $G$ can be freely chosen such that $G \leq F < d/2$.

In the proofs of the theorems we use the fact that there is redundancy in the information given by the line sums. Hajdu and Tijdeman \cite{ht17} pursued an analysis of the redundacy by Van Dalen \cite{bvd}. Their line sum relation lemma (Lemma \ref{relat}) is the basis of the present paper. Besides, some properties of Vandermonde determinants are derived and used. By the redundacy of data the values of the wrong line sums do not matter. The lines with correct line sums are detected and the right values of the wrong line sums are derived from the correct line sums.

\section{The main results}
\label{sec2}

Let $d,m,n$ be positive integers and $A= \{(i,j) \in \mathbb{Z}^2 : 0 \leq i <m, 0 \leq j <n \}$. Let $D = \{(a_h,b_h)  : h=1,2, \dots, d \}$ be a set of pairs of coprime integers with $a_h \geq 0$ and $b_h=1$ if $a_h=0$.
 We call the elements of $D$ directions. For $f : A \to \mathbb{R}$ we define the line sum in the direction of $(a_h,b_h)$ by
\begin{equation} \label{defm}
    \ell_{h,t} = \sum_{(i,j) \in A,~ b_hi-a_hj=t} f(i,j)
\end{equation}
for $h=1$, $2$, $\dots$, $d$ and $t \in \mathbb{Z}$.
Denote for all $h$ and $t$ by $\ell_{h,t}^*$ the corresponding measured line sum. We call line sums with $\ell_{h,t}^* = \ell_{h,t}$ correct line sums and others wrong line sums.
In this paper we suppose that all the line sums in the directions of $D$ are measured and that there are less than $d/2$ wrong line sums and show how to correct them.

\begin{theorem}
\label{mainth}
Let $d,m,n$ be positive integers and let $A$ and $D$ be as defined above.
Let $f : A \to \mathbb{R}$ be an unknown function such that for $h=1$, $2$, $\dots$, $d$ the line sums $\ell^*_{h,t}$ in the direction of $(a_h,b_h)$ are measured with in total less than $d/2$ wrong line sums. Then the correct line sums can be determined.
\end{theorem}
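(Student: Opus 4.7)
My strategy is to recast the statement in coding-theoretic language. Writing $B$ for the $L \times mn$ incidence matrix of the $L$ lines against the $mn$ cells of $A$, the realizable line-sum vectors form a linear code $C := \operatorname{image}(B) \subseteq R^L$. The theorem will follow from the claim that every nonzero $c \in C$ has Hamming weight at least $d$. Indeed, suppose two realizable candidate corrections $\ell^{(1)}, \ell^{(2)} \in C$ each differ from the measurements $\ell^*$ in strictly fewer than $d/2$ positions. Then by the triangle inequality for Hamming weight, $\ell^{(1)} - \ell^{(2)} \in C$ has weight strictly less than $d$; the claim forces $\ell^{(1)} = \ell^{(2)}$. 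So the correct line sums are uniquely pinned down as the nearest element of $C$ to $\ell^*$, and in principle they can be identified by searching over the $<d/2$ positions that must be altered to land in $C$.

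To prove the minimum-weight claim, fix $\phi \colon A \to R$ whose line-sum vector $\ell = B\phi$ vanishes off a set $S$ of size $s \leq d-1$; I want to show $\ell_{h,t} = 0$ on $S$ as well. The plan is to invoke Lemma~\ref{relat} to produce, from each tuple $(p_1, \dots, p_d)$ of test polynomials satisfying $\sum_{h=1}^d p_h(b_h i - a_h j) \equiv 0$ on $A$, a homogeneous relation $\sum_{h,t} p_h(t)\, \ell_{h,t} = 0$ among the line sums. Restricting the outer sum to $(h,t) \in S$ and choosing $s$ such tuples yields a homogeneous $s \times s$ linear system in the unknowns $\{\ell_{h,t}\}_{(h,t)\in S}$; nonsingularity of this system will complete the argument.

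The decisive obstacle is exhibiting the required nonsingularity over the integral domain $R$. My plan is to choose the $p_h$'s from a monomial basis $\{u^k\}_{k\geq 0}$ so the coefficient matrix inherits Vandermonde-type structure with nodes built from the direction data $(a_h, b_h)$ together with the offsets $t$. The coprimality of each $(a_h, b_h)$ and the pairwise distinctness of the $d$ directions (as projective points, since $a_h \geq 0$ with $b_h = 1$ if $a_h = 0$) will guarantee that the resulting $s \leq d-1$ nodes are pairwise distinct, so the Vandermonde determinant is a nonzero element of $R$, forcing $\ell_{h,t} = 0$ on $S$. The delicate point — the hardest technical part — is arranging this uniformly across regimes: when $S$ is concentrated in a single direction (the Vandermonde nodes must then come from the offsets $t$ alone) versus when $S$ is spread across many directions (the nodes come from the $(a_h, b_h)$); a careful choice of test-polynomial tuples that works in all intermediate cases is what makes the Vandermonde identification uniform.
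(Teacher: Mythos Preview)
Your coding-theoretic framing is sound, and the minimum-distance claim --- every nonzero vector in $C=\operatorname{image}(B)$ has Hamming weight at least $d$ --- is true and does imply the uniqueness assertion. This is a genuinely different route from the paper's: the paper never isolates the minimum distance but instead runs an iterative procedure (Section~\ref{sec7}) that uses Lemma~\ref{relat} together with the determinant identity of Lemma~\ref{determ} to detect the bad directions level by level, and then repairs each one via Lemma~\ref{thmdir}. Your approach buys conceptual clarity and a clean link to error-correcting codes; the paper's buys a polynomial-time correction procedure, whereas your ``search over the $<d/2$ positions'' is exponential in $d$.

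The gap is exactly where you flag it, and the fix you sketch does not work as stated. The relations supplied by Lemma~\ref{relat} place the coefficient $E_{h,K}\,t^{|K|-2}$ at position $(h,t)$; the factor $E_{h,K}$ depends on the entire subset $K$, not on $(h,t)$ alone, so there is no assignment of a single node $\nu_{h,t}$ making the restricted $s\times s$ system a Vandermonde matrix in those nodes. A uniform ``mixed-node'' Vandermonde simply does not fall out of Lemma~\ref{relat}, and you have not produced an alternative family of test-polynomial tuples that would.

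What does work is to treat the directions sequentially rather than all at once. Suppose $\phi$ has $s_h$ nonzero line sums in direction $h$ with $\sum_h s_h\le d-1$, and order the bad directions so that $s_{(1)}\le\cdots\le s_{(r)}$. When you reach bad direction $(i)$ you already have $d-r+i-1$ directions with all line sums zero; applying Lemma~\ref{relat} with $K=\{(i)\}\cup G$ for $|G|=1,\dots,s_{(i)}$ drawn from those good directions yields $\sum_t t^{j}\ell_{(i),t}=0$ for $j=0,\dots,s_{(i)}-1$, an honest Vandermonde system in the $s_{(i)}$ distinct offsets. The required inequality $d-r+i-1\ge s_{(i)}$ follows from $(r-i+1)\,s_{(i)}\le\sum_{j\ge i}s_{(j)}\le d-1$, which rearranges to $(m-1)(d-m-1)\ge 0$ with $m=r-i+1\in\{1,\dots,d-1\}$. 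This closes the gap, but note that it is essentially the mechanism behind the paper's Theorem~\ref{onedir} rather than the new Vandermonde identity you were hoping for.
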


It is remarkable that the bound depends only on $d$ and is independent of $m, n$ and the directions themselves. The restriction on the entries of the directions serves to choose one of the two directions $(a,b)$ and $(-a,-b)$ which provide the same line sums.

If $k$ directions with only correct line sums are known and there are not too many wrong line sums in some other direction, then these wrong line sums can be corrected:

\begin{theorem} \label {onedir}
Suppose, in the notation of Theorem \ref{mainth}, all the line sums in directions $(a_h,b_h)$ are known to be correct for $h=1, 2, \dots, k$. Then the line sums can be corrected in each direction with less than $k/2$ wrong line sums, and, moreover, in each direction with at most $k$ wrong line sums where it is known which line sums are wrong.
\end{theorem}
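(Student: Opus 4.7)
My plan is to reduce Theorem \ref{onedir} to a minimum-weight calculation for an affine ``code'' attached to each uncorrected direction. Fix the $k$ safe directions and note that two functions $f_1, f_2 : A \to R$ produce the same line sums in these directions if and only if $f_1 - f_2$ is a \emph{$k$-ghost}, i.e., has vanishing line sums in all of them. Consequently, for any further direction $(a, b)$, the vectors of line sums compatible with the known correct data form an affine coset $\ell^{\mathrm{true}} + \mathcal{V}$, where $\mathcal{V}$ is the linear space of direction-$(a, b)$ line-sum vectors arising from $k$-ghosts. Standard coset decoding then yields both claims of the theorem provided I can show the minimum Hamming weight of $\mathcal{V}$ is at least $k+1$: this minimum distance exceeds $2e$ whenever $e < k/2$, giving unique error correction for fewer than $k/2$ errors, and it also exceeds any pattern of $\leq k$ prescribed erasure positions, giving erasure decoding in that case.

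The heart of the proof is thus the minimum-weight bound. Encode $f$ as $F(X, Y) = \sum f(i, j)\, X^i Y^j$; after the usual clearing of Laurent denominators, a $k$-ghost corresponds to $F$ being divisible by $P(X, Y) = \prod_{h=1}^{k}(X^{a_h} Y^{b_h} - 1)$. The substitution $X = u^{b}$, $Y = u^{-a}$ sends direction-$(a, b)$ line sums to coefficients of the Laurent polynomial $L(u) = F(u^b, u^{-a})$, which for a $k$-ghost factors as
\[
L(u) = \prod_{h=1}^{k}(u^{c_h} - 1)\cdot S(u), \qquad c_h = b\, a_h - a\, b_h \neq 0,
\]
where each $c_h$ is nonzero because the directions are pairwise distinct. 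In particular $u = 1$ is a root of every factor, so $(u-1)^{k} \mid L(u)$. If $L$ were a nonzero Laurent polynomial with only $s \leq k$ terms, say $L(u) = \sum_{i=1}^{s}\alpha_i u^{d_i}$ with distinct exponents $d_i$, then the conditions $L^{(j)}(1) = 0$ for $j = 0, \ldots, s-1$ would give a linear system in the $\alpha_i$ whose matrix, after passing from falling factorials to ordinary powers, is the Vandermonde matrix $(d_i^{\,j})$ with nonzero determinant $\prod_{i<i'}(d_{i'} - d_i)$. This forces every $\alpha_i = 0$, contradicting $L \neq 0$; hence $\mathcal{V}$ has minimum weight $\geq k+1$.

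Combining these two steps proves Theorem \ref{onedir}, applied independently to each additional direction. The main obstacle I anticipate is the polynomial-ghost dictionary: one must justify carefully that $k$-ghosts supported in the finite grid $A$ really correspond to $F$ divisible by $P$, so that the factorization above always holds despite the grid constraints that restrict the admissible $S(u)$. A second, milder issue is the Vandermonde nonvanishing in the integral domain $R$, which is automatic in characteristic zero and can be arranged in positive characteristic by standard precautions on the evaluation exponents. Both points are handled by the line-sum relation lemma (Lemma \ref{relat}) and by the Hajdu--Tijdeman structural results for ghosts that the paper already invokes.
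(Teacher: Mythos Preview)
Your argument is correct but takes a different route from the paper. The paper's proof is constructive: it applies Lemma~\ref{relat} to the sets $\{1,\dots,j,H\}$ for $j=1,\dots,k$ to obtain $k$ explicit syndrome equations $\sum_t t^{j-1}(\ell^*_{H,t}-\ell_{H,t})=c^*_{j,H}/E_{H,K_{j,H}}$ with computable right-hand sides, and then decodes via Prony's method (Lemma~\ref{thmdir}) when the error locations are unknown, or via a single Vandermonde inversion when they are. You instead show, via the generating-function substitution $L(u)=G(u^{b},u^{-a})$ and the divisibility $(u-1)^k\mid L$, that the direction-$H$ line-sum vectors of $k$-ghosts form a code of minimum Hamming weight at least $k+1$, and then invoke abstract error/erasure decoding. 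The two are the same fact in different clothing: $(u-1)^k\mid L$ is equivalent, after the falling-factorial change of basis you mention, to the vanishing of the moments $\sum_t t^{j}L_t$ for $j<k$, which is exactly the content of Lemma~\ref{relat} specialized to ghosts. The paper's route buys an explicit polynomial-time procedure that feeds directly into the algorithm of Section~\ref{sec8}; yours buys a clean coding-theoretic picture that makes it transparent why $k/2$ and $k$ are precisely the unique-decoding and erasure radii. Two minor remarks on your closing caveats: the divisibility of $G$ by $\prod_h(X^{a_h}Y^{b_h}-1)$ follows directly from the fact that each factor divides $G$ together with pairwise coprimality in the Laurent ring, so you do not actually need Lemma~\ref{relat} or the full ghost structure theorem for that step; and since the theorem is stated over $\mathbb{R}$, your positive-characteristic worry is moot here.
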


 In Section \ref{sec3} we show that the bound $d/2$ cannot be improved. Sections \ref{sec4} and \ref{sec5} contain results related to Vandermonde determinants. The line sum relation lemma is proved in Section \ref{sec6}. Theorems \ref{onedir} and \ref{mainth} are derived in Sections \ref{oned} and \ref{sec7}, respectively. In Section \ref{sec8} a pseudocode is provided, which details the steps of the algorithm to find the correct line sums. An example in Section \ref{sec9} illustrates the algorithm. Section \ref{sec10} provides an analysis of the complexity of the algorithm. In the final section we state some conclusions.

\section{An example which shows that Theorem \ref{mainth} cannot be improved}
\label{sec3}

Change the value of $f$ at one element of $A$. Then exactly $d$ line sums change. Thus $d$ of the pairs of corresponding line sums are different. It follows that the bound $d/2$ in Theorem \ref{mainth} is the best possible.
\vskip.2cm
\noindent {\bf Example 1.} Consider the function $f: A \to \mathbb{Z}$ with one unknown value indicated by ?.\vspace*{1mm}
\[
\begin{array}{|c|c|c|c|}
\hline
2&6&5&4 \\
\hline
3&?&2&0 \\
\hline
5&1&4&2 \\
\hline
6&3&1&4 \\
\hline
\end{array}\vspace*{1mm}
\]
Suppose the measured horizontal line sum through ? is 7, the measured vertical line sum through ? is 12 and both measured diagonal line sums through ? are 13. Then $d=4$, the horizontal and vertical line sums suggest that the value of ? is 2 whereas the diagonal line sums indicate that it should be 3. Both values result in $2 = d/2$ wrong line sums. \QED

\medskip
Obviously this can be generalized for arbitrarily large $m,n$ and $d$.

\section{Vandermonde equations with variable coefficients}
\label{sec4}

Let $r$ be a positive integer. Let $c_0$, $c_1$, $\dots$, $c_{2r-1}$ and $t_1$, $t_2$, $\dots$, $t_r$ be given real numbers with $t_1$, $t_2$, $\dots$, $t_r$ distinct. It is well known and very useful that a system of linear equations
\begin{equation}
\label{eqvarm}
    \sum_{i=1}^r  t_i^j x_i = c_j
\end{equation}
for $j=0$, $1$, $\dots$, $r-1$ in unknowns $x_1$, $x_2$, $\dots$, $x_r$ has a unique solution which can be found by using a Vandermonde matrix. In this section we show how to solve the system of equations \eqref{eqvarm}
for $j=0$, $1$, $\dots$, $2r-1$, if both  $x_1$, $x_2$, $\dots$, $x_r$ and $t_1$, $t_2$, $\dots$, $t_r$ are unknowns.

\medskip
The method is based on the following lemmas.

\begin{lemma}
\label{vdmsq}
Let M be the $r$ by $r$ matrix with entries $M_{i,j} = \sum_{h=1}^r t_h^{i+j}$ for $i$, $j=0$, $1$, $\dots$, $r-1$.
Then
\begin{equation*}
    \det(M) = \prod_{1 \leq i < j \leq r} (t_j-t_i)^2.
\end{equation*}
\end{lemma}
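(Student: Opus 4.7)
The plan is to recognize $M$ as a Gram-type matrix built from a Vandermonde matrix and then invoke the classical Vandermonde determinant formula.

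Concretely, I would let $V$ denote the $r \times r$ Vandermonde matrix with entries $V_{h,j} = t_h^{j}$ for $h = 1, 2, \dots, r$ and $j = 0, 1, \dots, r-1$. A direct computation of the $(i,j)$-entry of $V^{T} V$ gives
\[
(V^{T} V)_{i,j} \;=\; \sum_{h=1}^{r} V_{h,i}\, V_{h,j} \;=\; \sum_{h=1}^{r} t_h^{i} \, t_h^{j} \;=\; \sum_{h=1}^{r} t_h^{i+j} \;=\; M_{i,j},
\]
so $M = V^{T} V$.

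Taking determinants and using multiplicativity, I obtain $\det(M) = \det(V^{T})\det(V) = \det(V)^{2}$. The classical Vandermonde identity states that $\det(V) = \prod_{1 \leq i < j \leq r}(t_j - t_i)$, and squaring this yields the desired formula
\[
\det(M) \;=\; \prod_{1 \leq i < j \leq r}(t_j - t_i)^{2}.
\]

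There is no real obstacle here: the main step is simply spotting that $M$ is the Gram matrix $V^{T} V$ of the Vandermonde matrix, after which everything reduces to a standard identity. The only small care needed is in the indexing convention for $V$, but the sign of $\det(V)$ is irrelevant because it gets squared. This lemma will then presumably be combined with further linear-algebraic manipulations in the remainder of Section~\ref{sec4} to handle the system \eqref{eqvarm} when both $x_i$ and $t_i$ are unknown.
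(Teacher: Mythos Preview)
Your proof is correct and is essentially identical to the paper's own proof: the paper also observes that $M = V^{T}V$ for the Vandermonde matrix $V$ with $V_{i,j}=t_i^{j}$ and concludes $\det(M)=(\det V)^2=\prod_{i<j}(t_j-t_i)^2$. The only difference is that you spell out the entrywise verification of $M=V^{T}V$, which the paper leaves implicit.
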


\begin{proof}
Observe that $M=V^T \cdot V$ where $V$ is the Vandermonde matrix with $V_{i,j} = t_i^{j}$ for $i=1$, $2$, $\dots$, $r$ and $j=0$, $1$, $\dots$, $r-1$. Therefore
\begin{equation*}
    \det(M) =  (\det(V))^2 = \left( \prod_{i<j} (t_j-t_i) \right)^2.
\end{equation*}

\vspace*{-9mm}
\end{proof}

\begin{lemma}
\label{reccB}
Let $c_0$, $c_1$, $\dots$, $c_{2r-1}$ be given real numbers. If $t_1$, $t_2$, $\dots$, $t_r$ and $x_1$, $x_2$, $\dots$, $x_r$ satisfy \eqref{eqvarm} for $j = 0,1, \dots 2r-1$, then
\begin{equation}
\label{relcB}
    c_{j+r} -c_{j+r-1}B_1 + \dots + (-1)^rc_jB_r = 0
\end{equation}
for $j=0$, $1$, $\dots$, $r-1$ where $B_1$, $B_2$, $\dots$, $B_r$ are defined by
\begin{equation}
\label{defB}
    (z-t_1)(z-t_2) \dots (z-t_r)= z^r -B_1z^{r-1} + \dots + (-1)^r B_r.
\end{equation}
\end{lemma}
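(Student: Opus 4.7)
The plan is to exploit the fact that, by definition \eqref{defB}, each $t_i$ is a root of the polynomial $P(z)=z^r-B_1z^{r-1}+\cdots+(-1)^rB_r$. So for every $i\in\{1,2,\dots,r\}$ we have
\[
t_i^r-B_1t_i^{r-1}+B_2t_i^{r-2}-\cdots+(-1)^rB_r=0.
\]
This single algebraic identity per $i$ is what drives the recurrence; the rest is just bookkeeping with the hypothesis \eqref{eqvarm}.

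First I would fix $j\in\{0,1,\dots,r-1\}$ and multiply the above identity by $t_i^j x_i$, obtaining
\[
t_i^{j+r}x_i-B_1t_i^{j+r-1}x_i+\cdots+(-1)^rB_rt_i^{j}x_i=0
\]
for each $i$. Next I would sum over $i=1,2,\dots,r$ and interchange the order of summation, so that each coefficient $B_k$ is multiplied by a power-sum $\sum_{i=1}^r t_i^{j+r-k}x_i$. Crucially, the exponents appearing range only from $j$ up to $j+r$, and since $0\le j\le r-1$, all of these lie in $\{0,1,\dots,2r-1\}$, which is exactly the range on which the hypothesis \eqref{eqvarm} supplies $\sum_{i=1}^r t_i^{s}x_i=c_s$.

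Substituting yields precisely
\[
c_{j+r}-c_{j+r-1}B_1+\cdots+(-1)^rc_jB_r=0,
\]
which is \eqref{relcB}. Since $j$ was an arbitrary integer in $\{0,1,\dots,r-1\}$, this proves the lemma.

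There is really no substantive obstacle here: the only thing that could go wrong is running out of hypotheses, i.e.\ needing a value $c_s$ with $s\ge 2r$ or $s<0$, and the range $0\le j\le r-1$ is chosen exactly to avoid that. So the step that deserves a moment of care is verifying the index bounds after summing; everything else is a direct computation built on the root property of $P$.
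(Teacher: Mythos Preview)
Your proof is correct and follows essentially the same approach as the paper: both use that each $t_i$ is a root of the polynomial \eqref{defB}, multiply by $t_i^j x_i$, sum over $i$, swap the order of summation, and invoke \eqref{eqvarm} to replace the power sums by the $c_s$. Your explicit check of the index range $0\le s\le 2r-1$ is a nice addition that the paper leaves implicit.
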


\begin{proof}
We have\vspace*{-2.5mm}
\begin{align*}
    c_{j+r} = \sum_{i=1}^r  t_i^{j+r} x_i &= \sum_{i=1}^r x_i \sum_{h=1}^r (-1)^{h-1}B_ht_i^{j+r-h} \\
    &= \sum_{h=1}^r (-1)^{h-1} B_h \sum_{i=1}^r t_i^{j+r-h} x_i \\
    &= \sum_{h=1}^r (-1)^{h-1} B_h c_{j+r-h}.
\end{align*}

\vspace*{-9mm}
\end{proof}

We are now ready to show how system \eqref{eqvarm} can be solved if both  $x_1$, $x_2$, $\dots$, $x_r$ and $t_1$, $t_2$, $\dots$, $t_r$ are unknowns.

\begin{lemma}
\label{thmdir}
Let $r$ be a positive integer. Let $c_0$, $c_1$, $\dots$, $c_{2r-1}$ be given real numbers. If nonzero real numbers $x_1$, $x_2$, $\dots$, $x_r$ and distinct real numbers $t_1$, $t_2$, $\dots$, $t_r$ satisfy \eqref{eqvarm} for $j = 0,1, \dots, 2r-1$, then $B_1$, $B_2$, $\dots$, $B_r$ defined by \eqref{defB} can be determined by solving the linear system \eqref{relcB} for $j=0$, $1$, $\dots$, $r-1$. Subsequently $t_1$, $t_2$, $\dots$, $t_r$  can be found  by computing the zeros of the polynomial
\begin{equation}
\label{compm}
    z^r - B_1 z^{r-1} + B_2z^{r-2} + \dots + (-1)^r B_r.
\end{equation}
If $t_1$, $t_2$, $\dots$, $t_r$ are chosen, the values of $x_1$, $x_2$, $\dots$, $x_r$ can be found by solving system \eqref{eqvarm} for $j=0$, $1$, $\dots$, $r-1$.
\end{lemma}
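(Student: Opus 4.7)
The plan is to verify each of the three stages of the procedure in turn. First I would apply Lemma~\ref{reccB} directly: since the hypothesis says that the $t_i$ and $x_i$ satisfy \eqref{eqvarm} for $j=0,1,\dots,2r-1$, the quantities $B_1,\dots,B_r$ defined by \eqref{defB} must satisfy the $r$ linear equations \eqref{relcB}. Thus these equations, viewed as a system in unknowns $B_1,\dots,B_r$, are at least consistent and have the ``true'' $B_k$ among their solutions.

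The main obstacle, and the heart of the argument, is to show that this linear system actually pins the $B_k$ down uniquely, so that one really does recover the coefficients of $(z-t_1)\cdots(z-t_r)$ by solving it. For this I would exhibit a factorisation of the coefficient matrix. Using $c_{j+r-h}=\sum_{i=1}^r t_i^{j+r-h}x_i=\sum_{i=1}^r t_i^{j}\,(x_i t_i^{r-h})$, the $r\times r$ matrix $C$ with entries $C_{j,h}=c_{j+r-h}$ splits as $C=V\,D\,W$, where $V$ is the Vandermonde matrix with $V_{j,i}=t_i^{j}$, $D=\operatorname{diag}(x_1,\dots,x_r)$, and $W$ is the matrix with $W_{i,h}=t_i^{r-h}$ (essentially $V^{T}$ with columns reversed). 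Since the $t_i$ are distinct, $\det V$ and $\det W$ are (up to sign) $\prod_{i<j}(t_j-t_i)\neq 0$, and since the $x_i$ are nonzero, $\det D\neq 0$. The signs $(-1)^{h-1}$ in \eqref{relcB} only alter $\det C$ by $\pm1$, so the coefficient matrix of the system for $B_1,\dots,B_r$ is nonsingular. Consequently \eqref{relcB} has a unique solution, which therefore must coincide with the $B_k$ from \eqref{defB}.

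Once the $B_k$ are known, the polynomial \eqref{compm} is exactly $(z-t_1)(z-t_2)\cdots(z-t_r)$ by \eqref{defB}, so its zeros are the desired $t_1,\dots,t_r$ (in some order, which is irrelevant for the sequel). Fixing any such ordering, the first $r$ equations of \eqref{eqvarm} now form a standard Vandermonde system in the unknowns $x_1,\dots,x_r$ whose matrix $V$ is invertible, again because the $t_i$ are distinct. Solving this system recovers $x_1,\dots,x_r$, completing the reconstruction and proving the lemma.
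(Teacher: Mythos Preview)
Your argument is correct and follows essentially the same route as the paper. Both proofs invoke Lemma~\ref{reccB} to see that the true $B_h$ satisfy \eqref{relcB}, and both establish uniqueness by showing the coefficient matrix is nonsingular via a Vandermonde-type factorisation. The only cosmetic difference is that the paper pulls out the factor $x_1\cdots x_r$ and then appeals to Lemma~\ref{vdmsq} (i.e.\ $M=V^{T}V$) for the remaining determinant, whereas you write the factorisation $C=V\,D\,W$ in one step; the resulting determinant $\pm\, x_1\cdots x_r\prod_{i<j}(t_j-t_i)^2$ is the same either way.
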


\begin{proof}
First we apply Lemma \ref{reccB}, where, by \eqref{eqvarm}, we have to solve a system of $r$ linear equations in $r$ unknowns $B_1$, $B_2$, $\dots$, $B_r$  with coefficient matrix $M^*$ with $M^*_{i,j}= (-1)^{r-i} \sum_{h=1}^r t_h^{i+j}x_h$ for $i$, $j=0$, $1$, $\dots$, $r-1$. Note that $$\det(M^*) = \pm x_1x_2 \cdots x_r \cdot \det(M)$$ where $M$ is the matrix from Lemma \ref{vdmsq}. Since $\det(M^*)$ is nonzero by Lemma \ref{vdmsq}, we can solve the system of $r$ linear equations and so determine the numbers $B_1$, $B_2$, $\dots$, $B_r$. By computing the zeros of \eqref{compm} the numbers $t_i$ are found. Note that the numbers $t_1$, $t_2$, $\dots$, $t_r$ cannot be distinguished and we may assume $t_1 < t_2 < \dots < t_r$. The expression for $x_i$ follows from solving system \eqref{eqvarm} for $j=0$, $1$, $\dots$, $r-1$ using Cramer's rule.
\end{proof}

\noindent {\bf Remark.} Later on we apply Lemma \ref{thmdir} in such a way that the number $t_i$ corresponds to the line $b_hx-a_hy=t_i$ for which the line sum is wrong and the corresponding number $x_i$ is equal to the difference between the measured line sum and the correct line sum. We refer to Section \ref{sec8}, in particular formula \eqref{relVW}, for the way the $x_i$'s are computed in practice.
\vskip.1cm
We conclude this section with a simple application of the Vandermonde determinant.

\begin{lemma}
\label{vdmo}
Let $t_1$, $t_2$, $\dots$, $t_r$ be distinct integers. If $ \sum_{i=1}^r t_i^jx_i = 0$ for $j=0$, $1$, $\dots$, $k-1$, then $k<r$ or $x_1=x_2= \dots = x_r=0.$
\end{lemma}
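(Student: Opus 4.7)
The plan is to prove this by a direct Vandermonde argument via contrapositive: suppose both that $k \geq r$ and that the $x_i$ are not all zero, and derive a contradiction.

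First, I would consider the first $r$ of the given equations, namely
\[
    \sum_{i=1}^r t_i^j x_i = 0 \quad \text{for } j = 0, 1, \dots, r-1,
\]
which is well-defined precisely because $k \geq r$. This is a homogeneous linear system in the unknowns $x_1, x_2, \dots, x_r$ whose coefficient matrix is the (transpose of the) Vandermonde matrix $V$ with $V_{j,i} = t_i^j$. Its determinant equals
\[
    \det(V) = \prod_{1 \leq i < j \leq r}(t_j - t_i),
\]
which is nonzero since the $t_i$ are pairwise distinct integers. Hence $V$ is invertible and the only solution is the trivial one $x_1 = x_2 = \dots = x_r = 0$, contradicting the assumption that some $x_i$ is nonzero.

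This shows that whenever $k \geq r$, we must have $x_1 = x_2 = \dots = x_r = 0$, which is logically equivalent to the stated disjunction $k < r$ or $x_1 = x_2 = \dots = x_r = 0$. There is no real obstacle here; the only thing to be careful about is that the hypothesis $k \geq r$ is what allows us to extract an $r \times r$ subsystem, and that distinctness of the $t_i$ (not just their being integers) is what guarantees nonvanishing of the Vandermonde determinant.
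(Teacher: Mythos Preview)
Your proof is correct and is essentially the same Vandermonde argument as in the paper: both take the first $r$ equations (available when $k\ge r$), observe that the coefficient matrix is a Vandermonde matrix with nonzero determinant since the $t_i$ are distinct, and conclude that all $x_i$ vanish. The paper's version is simply a one-line statement of this.
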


\begin{proof}
It follows from $ \sum_{i=1}^r t_i^jx_i = 0$ for $j=0$, $1$, $\dots$, $r-1$ that the $t_i$'s are not distinct or all $x_i$'s are 0.
\end{proof}

\section{A Vandermonde-related determinant}
\label{sec5}

We prove the following result.
\begin{lemma}
\label{determ}
Let $a_1$, $a_2$, $\dots$, $a_{2k}$, $b_1$, $b_2$, $\dots$, $b_{2k}$ be variables. Set $W_{u,v} = a_ub_v-a_vb_u$ for $u, v = 1, 2, \dots, 2k$. Let $M$ be a $k \times k$ matrix with entries $M_{h,H} = \left(\prod_{i=1}^k W_{i,k+H} \right) / W_{h,k+H}$ for $h,H=1, \dots, k$. Then\vspace*{-1mm}
\begin{equation*}
    \det(M) = (-1)^{k(k-1)/2} \left( \prod_{1\leq h_1 <h_2 \leq k} W_{h_1,h_2} \right) \left(\prod_{ 1 \leq H_1  <H_2 \leq k} W_{k+H_1,k+H_2} \right).
\end{equation*}
\end{lemma}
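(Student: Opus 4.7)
The plan is to reduce the computation of $\det(M)$ to the classical Cauchy determinant. First factor $M = ND$, where $D$ is the diagonal matrix with $D_{H,H} = \prod_{i=1}^k W_{i,k+H}$ and $N$ has entries $N_{h,H} = 1/W_{h,k+H}$; this reproduces the definition of $M$ entry by entry. Hence $\det(M) = \det(N)\cdot\prod_{h,H} W_{h,k+H}$, and the task reduces to evaluating the Cauchy-like determinant $\det(N)$.

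Writing $W_{h,k+H} = b_h b_{k+H}\bigl(a_h/b_h - a_{k+H}/b_{k+H}\bigr)$ and setting $p_h = a_h/b_h$, $q_H = -a_{k+H}/b_{k+H}$, the matrix $N$ becomes the Cauchy matrix $\bigl(1/(p_h+q_H)\bigr)$ rescaled by the row/column factors $1/b_h$ and $1/b_{k+H}$. I would then apply the classical Cauchy identity
\[
\det\!\Bigl(\tfrac{1}{p_h+q_H}\Bigr) = \frac{\prod_{h_1<h_2}(p_{h_2}-p_{h_1})\prod_{H_1<H_2}(q_{H_2}-q_{H_1})}{\prod_{h,H}(p_h+q_H)}
\]
and translate back via
\[
p_{h_2}-p_{h_1} = -\,\frac{W_{h_1,h_2}}{b_{h_1}b_{h_2}},\qquad q_{H_2}-q_{H_1} = \frac{W_{k+H_1,k+H_2}}{b_{k+H_1}b_{k+H_2}},\qquad p_h+q_H = \frac{W_{h,k+H}}{b_h b_{k+H}}.
\]
The denominator $\prod_{h,H} W_{h,k+H}$ of the Cauchy formula cancels against $\det D$, and all powers of $b_h$ and $b_{k+H}$ cancel between the row/column rescalings of $N$ and those arising from the three substitutions above. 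The $\binom{k}{2}$ minus signs on the $p$-side combine to the overall sign $(-1)^{\binom{k}{2}} = (-1)^{k(k-1)/2}$, and the $q$-side contributes none, yielding exactly the claimed identity. (The identity is polynomial in the $a_i, b_i$, so computing on the Zariski-dense set where all $b_i\neq 0$ is enough.)

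An equivalent polynomial route, which avoids introducing rational functions, is to observe that each $W_{h_1,h_2}$ with $h_1,h_2\le k$ divides $\det(M)$: specializing $W_{h_1,h_2}=0$ makes $(a_{h_1},b_{h_1})$ a scalar multiple $\mu\,(a_{h_2},b_{h_2})$, hence $W_{h_1,j}=\mu\,W_{h_2,j}$ for every $j$, and from $M_{h,H}=\prod_{i\neq h}W_{i,k+H}$ one reads off that row $h_2$ becomes $\mu$ times row $h_1$. The same argument applied to index pairs in $\{k+1,\dots,2k\}$ gives the column factors. Since $\det(M)$ has degree $k\cdot 2(k-1)=2k(k-1)$, matching the degree of the product, one concludes $\det(M)=c\cdot(\text{product})$, and the scalar $c=(-1)^{k(k-1)/2}$ is pinned down by any one specialization (e.g.\ $a_i=t_i$, $b_i=1$, which turns $M$ into a rescaled Vandermonde and the product into a difference product).

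The main obstacle is simply the bookkeeping: the several powers of $b_h$ and $b_{k+H}$ appearing in the Cauchy reduction, the row/column rescalings of $N$, and the hidden factors in $\det D$ must all be tracked so that they cancel cleanly, leaving only the two claimed products of $W$'s with the precise sign $(-1)^{k(k-1)/2}$.
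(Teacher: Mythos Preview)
Your argument is correct. Your second route (divisibility by each $W_{h_1,h_2}$ and $W_{k+H_1,k+H_2}$, degree matching, then a specialization to fix the scalar) is exactly the paper's strategy; the only difference is that the paper pins down the constant $(-1)^{k(k-1)/2}$ by identifying the lexicographically smallest monomial in the expansion and tracing it along the main diagonal, rather than by specialization. Your first route is genuinely different: you factor $M=ND$ with $N_{h,H}=1/W_{h,k+H}$, recognize $N$ as a row-and-column rescaling of a Cauchy matrix in the variables $p_h=a_h/b_h$, $q_H=-a_{k+H}/b_{k+H}$, and invoke the classical Cauchy determinant identity; the $b$-powers and the factor $\prod_{h,H}W_{h,k+H}$ cancel exactly as you say, leaving the stated formula with the sign coming from the $\binom{k}{2}$ minus signs on the $p$-side. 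Interestingly, the paper records $\det(N)$ (its $M^*$) only as a \emph{corollary} of the lemma, whereas you compute $\det(N)$ first and deduce $\det(M)$ from it. The Cauchy approach is shorter and more conceptual, but it passes through rational functions and needs the Zariski-density remark you made; the paper's monomial-tracking argument stays purely polynomial but is more laborious. One small imprecision: under the specialization $a_i=t_i$, $b_i=1$ the matrix $M$ is not literally a rescaled Vandermonde but rather a column-rescaled Cauchy matrix, so that specialization is really just your first argument again; it still pins down $c$, of course.
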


\begin{proof} Clearly, we can consider det$(M)$ to be a polynomial in the unique factorization domain ${\mathbb R}[a_1,a_2,\dots$, $a_{2k},b_1,b_2,\dots,b_{2k}]$. The degree of det$(M)$ equals $2 k^2-2k$. For each $h_1,h_2$ with $1 \leq h_1 < h_2 \leq k$ the columns numbered $h_1$ and $h_2$ are proportional if $a_{h_1}b_{h_2} = a_{h_2}b_{h_1}$ which implies det$(M)$=0. Therefore det$(M)$ is divisible by $W_{h_1,h_2}$. Similarly, for each $H_1,H_2$ with $1 \leq H_1 < H_2 \leq k$ the rows numbered $H_1$ and $H_2$ are proportional if $a_{k+H_1}b_{k+H_2} = a_{k+H_2}b_{k+H_1}$ which implies that det$(M)$ is divisible by $W_{k+H_1,k+H_2}$.
The product of these distinct and coprime factors,
\begin{equation*}
    \left( \prod_{1\leq h_1 < h_2 \leq k} W_{h_1,h_2} \right) \left(\prod_{ 1 \leq H_1  <H_2\leq k} W_{k+H_1, k+H_2} \right),
\end{equation*}
has degree $2 k^2-2k$ too. Therefore there is a real number $c$ such that
\begin{equation*}
    \det(M) = c  \left( \prod_{1\leq h_1 <h_2 \leq k} W_{h_1,h_2} \right) \left(\prod_{ 1 \leq H_1  <H_2 \leq k} W_{k+H_1,k+H_2} \right).
\end{equation*}
Since $a_ub_v$ is lexicographically smaller than $a_vb_u$ for $u<v$, we infer that
\begin{equation*}
c \cdot a_1^{k-1}a_2^{k-2} \cdots a_{k-1} \cdot a_{k+1}^{k-1}a_{k+2}^{k-2} \cdots a_{2k-1} \cdot b_{2} b_{3}^2 \cdots b_{k}^{k-1} \cdot b_{k+2} b_{k+3}^2 \cdots b_{2k}^{k-1}
\end{equation*}
is the smallest lexicographic element in the expansion of $\det(M)$.
We claim that on comparing the exponents it turns out that this term can only be obtained by developing the main diagonal of $M$. The first column is the only one containing $a_{k+1}$'s.
Since no $b_1$ should be chosen, the only possibility is to choose $-a_{k+1}b_2, -a_{k+1}b_3, \dots, -a_{k+1}b_k$ from the leftmost element of the first column.
The second column is the only one containing $a_{k+2}$'s. Therefore it has to be chosen $k-2$ times and the other has to involve $a_1$. Since $b_2$ should not be chosen anymore, we choose $a_1b_{k+2}$ and $-a_{k+2}b_3, -a_{k+2}b_4, \dots, -a_{k+2}b_k$ from the element at the second column of the main diagonal. Continuing in this way it turns out that the only possible choice of the factors is in the expansion of entry $M_{h,h}$ the term with factors
\begin{equation*}
a_1b_{k+h}, a_2b_{k+h}, \dots, a_{h-1}b_{k+h}, -a_{k+h}b_{h+1}, -a_{k+h}b_{h+2}, \dots, -a_{k+h}b_{k},
\end{equation*}
for $h=1,2, \dots, k$.
Since the coefficient of the resulting product is $(-1)^{k(k-1)/2}$, we conclude that $c = (-1)^{k(k-1)/2}$.
\end{proof}

\noindent {\bf Example 2.} For $k=3$ the matrix $M$ is as follows, where the chosen elements to obtain the smallest lexicographic element are boldface.
\[ \left(
\begin{array} {ccc}
(a_2b_4-{\bf a_4b_2})(a_3b_4-{\bf a_4b_3}) & (a_2b_5-a_5b_2)(a_3b_5-{a_5b_3}) & (a_2b_6-{a_6b_2})(a_3b_6-{a_6b_3}) \\
(a_1b_4- a_4b_1)(a_3b_4-{a_4b_3}) & ({\bf a_1b_5}-{a_5b_1})(a_3b_5-{\bf a_5b_3}) & (a_1b_6-{a_6b_1})(a_3b_6-{a_6b_3}) \\
(a_1b_4-{a_4b_1})(a_2b_4-{a_4b_2}) & (a_1b_5-{a_5b_1})(a_2b_5-{a_5b_2}) & ({\bf a_1b_6}-{a_6b_1)(\bf a_2b_6}-{a_6b_2})
\end{array}
\right) \]

\vskip.2cm
An alternative version of Lemma \ref{determ} reads as follows.
\begin{corollary}
Let $a_1$, $a_2$, $\dots$, $a_{2k}$, $b_{1}$, $b_{2}$, $\dots$, $b_{2k}$ be reals. Set $W_{u,v} = a_ub_v-a_vb_u$ for $u, v = 1, 2, \dots, 2k$. Let $M^* = \{M^*_{h,H} : h = 1,2, \dots, k; H=1, 2, \dots, k\}$ be the matrix with entries $M^*_{h,H} =1 /W_{h,k+H}$. Then
\begin{equation*}
    \det(M^*) = (-1)^{k(k-1)/2} \left( \prod_{1\leq h_1 <h_2 \leq k} W_{h_1,h_2} \right) \left(\prod_{ 1 \leq H_1  < H_2 \leq k} W_{k+H_1,k+H_2} \right) \left(\prod_{h=1}^k \prod_{H=1}^{k} W_{h,k+H} \right)^{-1}.
\end{equation*}
\end{corollary}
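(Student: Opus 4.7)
The plan is to derive the Corollary directly from Lemma \ref{determ} by a one-line column-scaling argument, since the matrix $M^*$ differs from $M$ only by a scalar factor in each column.

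First I would observe that the entries of $M$ and $M^*$ are related by
\begin{equation*}
M_{h,H} \;=\; \left(\prod_{i=1}^k W_{i,k+H}\right) M^*_{h,H},
\end{equation*}
and that the factor $\prod_{i=1}^k W_{i,k+H}$ depends only on the column index $H$, not on the row index $h$. Hence the $H$-th column of $M$ is the $H$-th column of $M^*$ multiplied by the scalar $\prod_{i=1}^k W_{i,k+H}$.

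Next I would pull these $k$ column scalars out of the determinant using multilinearity, obtaining
\begin{equation*}
\det(M) \;=\; \left(\prod_{H=1}^{k}\prod_{i=1}^{k} W_{i,k+H}\right)\det(M^*).
\end{equation*}
Solving for $\det(M^*)$ and substituting the closed form for $\det(M)$ supplied by Lemma \ref{determ} gives the stated expression. One small bookkeeping point I would double-check is that the product $\prod_{H=1}^{k}\prod_{i=1}^{k} W_{i,k+H}$ is exactly the denominator $\prod_{h=1}^{k}\prod_{H=1}^{k} W_{h,k+H}$ in the statement (just a renaming of the running index $i \mapsto h$); once this is noted, the proof is complete.

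There is no real obstacle here: the sign $(-1)^{k(k-1)/2}$ and the two Vandermonde-like products are inherited unchanged from Lemma \ref{determ}, and the only new ingredient is the column-scaling factor, which naturally produces the reciprocal factor appearing in the Corollary.
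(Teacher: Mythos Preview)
Your proof is correct and follows essentially the same approach as the paper: relate $M$ and $M^*$ via the column-dependent factor $\prod_{i=1}^k W_{i,k+H}$, pull these scalars out of the determinant, and invoke Lemma~\ref{determ}. The paper's argument is the same one-line column-scaling computation.
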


\begin{proof}
Note that $M_{h,H} = (\prod_{i=1}^k W_{i,k+H}) \cdot M^*_{h,H}$ and that the factor within brackets is independent of $h$. Hence,
$$\det(M) = \left(\prod_{H=1}^k \prod_{i=1}^k W_{i,k+H}\right) \det(M^*).$$
\end{proof}

\section{The line sum relation lemma}
\label{sec6}

The following result is of fundamental importance in  our present study. It follows from Lemma 4.1 of \cite{ht17}.
For the convenience of the reader we give a direct proof here.

\begin{lemma} \label{relat}
Let $A, D, f, \ell_{h,t}$ be as in Section \ref{sec2}. Let $K$ be a subset of $\{1,2, \dots, d\}$. For $h = 1$, $2$, $\dots$, $k := |K| \geq 2$ define $E_{h,K}$ by
\begin{equation}
\label{defE}
E_{h,K} = (-1)^{h-1} \prod_{i,j \in K,~ i <j,~ i,j \not= h} (a_ib_j - a_jb_i).
\end{equation}
Then $$\sum_{h \in K}  E_{h,K} \sum_{t \in \mathbb{Z} }t^{k-2} \ell_{h,t} =0.$$
\end{lemma}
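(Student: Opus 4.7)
The plan is to reduce the identity to a two-variable polynomial identity, and then to establish that identity via a determinant having two equal columns.

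First I would substitute the definition \eqref{defm} of $\ell_{h,t}$ and swap the order of summation. For each $h \in K$,
\[
\sum_{t \in \mathbb{Z}} t^{k-2} \ell_{h,t} \;=\; \sum_{(i,j) \in A} f(i,j)\,(b_h i - a_h j)^{k-2},
\]
so the target identity becomes $\sum_{(i,j)\in A} f(i,j)\,P(i,j) = 0$, where
\[
P(x,y) \;:=\; \sum_{h \in K} E_{h,K}\,(b_h x - a_h y)^{k-2}.
\]
Since $f$ is an arbitrary function on $A$, it is enough to prove the polynomial identity $P(x,y) \equiv 0$.

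Relabeling if necessary I assume $K=\{1,2,\ldots,k\}$. Expanding $(b_h x - a_h y)^{k-2}$ by the binomial theorem, the coefficient of $x^{k-2-m}y^m$ in $P(x,y)$ equals $\binom{k-2}{m}(-1)^m \sum_{h=1}^{k} E_{h,K}\,a_h^m b_h^{k-2-m}$, so it suffices to prove, for every $m=0,1,\ldots,k-2$,
\[
S_m \;:=\; \sum_{h=1}^{k} (-1)^{h-1}\, a_h^{m} b_h^{k-2-m} \!\!\prod_{\substack{1\le i<j\le k\\ i,j\ne h}}\!(a_i b_j - a_j b_i) \;=\; 0.
\]
To establish $S_m=0$ I would form the $k\times k$ matrix $N^{(m)}$ whose $h$-th row is
\[
\bigl(\,a_h^{k-2},\ a_h^{k-3}b_h,\ \ldots,\ a_h b_h^{k-3},\ b_h^{k-2},\ a_h^{m}b_h^{k-2-m}\,\bigr).
\]
The first $k-1$ columns of $N^{(m)}$ form the projective Vandermonde matrix of degree $k-2$ for the points $(a_h,b_h)$, whose $(k-1-m)$-th column carries precisely the entry $a_h^{m}b_h^{k-2-m}$. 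Thus the last column of $N^{(m)}$ duplicates an earlier column, so $\det N^{(m)} = 0$. Expanding along the last column, the $(h,k)$-minor is the $(k-1)\times(k-1)$ projective Vandermonde for the points $(a_i,b_i)$ with $i\ne h$, with determinant $\prod_{i<j,\,i,j\ne h}(a_ib_j-a_jb_i)$. Combined with the cofactor sign $(-1)^{h+k}$ and divided by the harmless global factor $(-1)^{k+1}$, this is exactly $S_m$, yielding $S_m=0$ as required.

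The main obstacle is purely bookkeeping: one has to verify that the projective Vandermonde formula $\det\bigl((a_i^{k-1-j}b_i^{\,j-1})_{i,j}\bigr) = \prod_{i<j}(a_ib_j-a_jb_i)$ holds with the sign I am assuming (a direct check via row-scaling by $b_i^{\,k-1}$ and column-reversal), and that the cofactor sign $(-1)^{h+k}$ matches the $(-1)^{h-1}$ hidden in $E_{h,K}$ up to a sign independent of $h$. Once those sign conventions are pinned down, the remainder is immediate.
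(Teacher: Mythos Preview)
Your proposal is correct and follows essentially the same argument as the paper's proof: both reduce to showing $\sum_{h\in K} E_{h,K}(b_h x - a_h y)^{k-2}\equiv 0$, expand into monomials, and kill each coefficient by recognizing it as the Laplace expansion of a $k\times k$ projective Vandermonde-type determinant with a repeated column. The only cosmetic differences are that the paper places the duplicated column first and expands along it (you place it last), and the paper states the determinant identity first and then derives the line-sum relation, whereas you proceed in the reverse order; the sign bookkeeping you flag is handled at the same level of detail in the paper (it simply asserts that the relevant minor equals $E_{h,K}$).
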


\begin{proof}
 Without loss of generality we may assume that $K = \{1, 2, \dots, k\}$ with $D_h = (a_h,b_h)$ for $h=1,2, \dots, k$.
Put ${\bf a}^s = (a_1^s, a_2^s, \dots, a_k^s)$ and ${\bf b}^s = (b_1^s, b_2^s, \dots, b_k^s)$ for $s = 0,1,2, \dots$.
We denote the determinant of the $m \times m$ matrix with $i$-th column vector ${\bf x}_i = (x_{1,i}, \dots, x_{m,i})$ by det$({\bf x}_1, \dots, {\bf x}_m)$.
Furthermore, we denote the determinant of the matrix which we obtain by omitting its first column vector and its $h$-th row vector by det$({\bf x}_2, \dots, {\bf x}_m)_h$.

\medskip
Obviously, for $s=0,1, \dots, k-2$ we have
$$ {\rm det}({\bf a}^s{\bf b}^{k-2-s}, {\bf a}^{k-2}, {\bf a}^{k-3}{\bf b}, {\bf a}^{k-4}{\bf b}^2, \dots, {\bf b}^{k-2}) = 0.$$
(Here the product of vectors is defined termwise.) By developing by the first column we obtain, for $s = 0,1, \dots, k-2,$
$$ \sum_{h=1}^k (-1)^{h-1}a_h^sb_h^{k-2-s} {\rm det}({\bf a}^{k-2}, {\bf a}^{k-3} {\bf b}, {\bf a}^{k-4}{\bf b}^2, \dots, {\bf b}^{k-2})_h = 0.$$
Observe that $(-1)^{h-1}{\rm det}({\bf a}^{k-2}, {\bf a}^{k-3} {\bf b}, {\bf a}^{k-4}{\bf b}^2, \dots, {\bf b}^{k-2})_h$ is the Vandermonde determinant $E_{h,k}$.
Hence, for arbitrary $(i,j) \in A$,
$$ \sum_{h=1}^k  (b_hi-a_hj)^{k-2} E_{h,K} = \sum_{s=0}^{k-2} {k-2 \choose s} i^{k-s-2}j^s \sum_{h=1}^ka_h^sb_h^{k-2-s} E_{h,k} = 0.$$
Since for every direction any element of $A$ is on exactly one line in that direction, we get
$$0 = \sum_{(i,j) \in A} f(i,j) \sum_{h=1}^k  (b_hi-a_hj)^{k-2} E_{h,K} = \sum_{h=1}^k \sum_{t \in \mathbb{Z}} ~\sum_{(i,j) \in A, b_hi-a_hj=t}~ f(i,j) (b_hi-a_hj)^{k-2}E_{h,K}.$$
Thus
$$0 = \sum_{h=1}^k \sum_{t \in \mathbb{Z}} t^{k-2} E_{h,K} ~\sum_{(i,j) \in A, b_hi-a_hj=t}~ f(i,j) = \sum_{h=1}^k \sum_{t \in \mathbb{Z}}E_{h,K}t^{k-2} \ell_{h,t}.$$

\vspace*{-9mm}
\end{proof}

\section{Error correction of line sums in one direction} \label{oned}
For the proof of Theorem 2.2 we combine the preceding lemmas.

\medskip\smallskip
\noindent  \textbf{Proof of Theorem 2.2}\\
Number the directions such that $D_1, D_2, \dots, D_k$ are directions with only correct line sums and direction $D_H$ for some fixed $H$ with $k < H \leq d$ may have wrong line sums.
For $j \leq k$ we apply Lemma \ref{relat} to the set $K_{j,H}:=\{1,2,\dots, j, H\}$,
\begin{equation}
\label{casek}
    \sum_{h=1}^{j}  E_{h,K_{j,H}} \sum_{t \in \mathbb{Z}} t^{j-1}\ell_{h,t} + E_{H,K_{j,H}} \sum_{t \in \mathbb{Z}} t^{j-1}\ell_{H,t} =0.
\end{equation}
Define and compute
\begin{equation}
\label{case*}
    c^*_{j,H} =   \sum_{h=1}^{j}  E_{h,K_{j,H}} \sum_{t \in \mathbb{Z}} t^{j-1}\ell^*_{h,t} + E_{H,K_{j,H}} \sum_{t \in \mathbb{Z}} t^{j-1}\ell^*_{H,t}.
\end{equation}
From \eqref{casek} and  \eqref{case*} we obtain,
\begin{equation}
\label{diffk}
    \sum_{h=1}^{j}  E_{h,K_{j,H}} \sum_{t \in \mathbb{Z}} t^{j-1}(\ell^*_{h,t}-\ell_{h,t}) + E_{H,K_{j,H}} \sum_{t \in \mathbb{Z}} t^{j-1}(\ell^*_{H,t}-\ell_{H,t})= c^*_{j,H}.
\end{equation}
The choice of $D_1$, $D_2$, $\dots$, $D_k$ implies $\ell^*_{h,t} = \ell_{h,t}$ for $h=1$, $2$, $\dots$, $k$ and all $t \in \mathbb{Z}$.  Thus
\begin{equation}
\label{dirH}
E_{H,K_{j,H}} \sum_{t \in \mathbb{Z}} t^{j-1}(\ell^*_{H,t} - \ell_{H,t})= c^*_{j,H}
\end{equation}
for $j=1$, $2$, $\dots$, $k$. Notice that by our assumption there are at most $k$ non-zero terms $\ell^*_{H,t} - \ell_{H,t}$, for $t= t_1, t_2, \dots, t_{r}$, say.
Then we have a system of linear equations \eqref{eqvarm} with $x_i = \ell^*_{H,t_i} - \ell_{H,t_i}, c_j = c^*_{j,H}/ E_{H,K_{j,H}}$. If $t_1, t_2, \dots, t_r$ are known, then we can simply solve system \eqref{eqvarm} and find $\ell^*_{H,t_i} - \ell_{H,t_i}$ for $i=1,2, \dots, r$ and determine the correct line sum $\ell_{H,t_i}$ for the line $b_Hx - a_Hy = t_i$ for $i=1,2, \dots, r$. If it is unknown which lines $b_Hx - a_Hy = t_i$ have wrong line sums, then $r < k/2$. Let $I$ be the largest integer less than $k/2$. Then we consider the system of linear equations
$$ \sum_{i=1}^I t_i^{j+h} x_i = c_{j+h} ~~~~{\rm for}~~j=0,1, \dots, I; h = 0,1,\dots, I.$$
The rank $r$ of the matrix with element $c_{j+h}$ for $j,h = 1, 2, \dots, I$ equals the number $r$ of line sums $\ell_{H,t_i}$ with wrong line sums.
Lemma \ref{thmdir} enables us to compute successively $B_1, B_2, \dots, B_r$ and $t_1, t_2, \dots, t_r$, indicating the lines $b_Hx-a_Hy=t_i$ where the wrong line sums are, and $x_1, x_2, \dots, x_r$, which represent the errors $\ell^*_{H,t_i} - \ell_{H,t_i}$ for $i=1,2, \dots, r$. Thus we can compute the correct line sums $\ell_{H,t_i}$ for $i=1,2, \dots, r.$ \hfill $\Box$

\section{Detection of directions with wrong line sums}
\label{sec7}

In this section we prove Theorem \ref{mainth}.

\medskip\smallskip
\noindent \textbf{Proof of Theorem 2.1.}\\
We introduce two parameters which may be used to reduce the amount of computation time: we assume that we want to find the correct line sums if in total there are at most
 $F$ wrong line sums and these wrong line sums are in at most $G$ directions. Thus $G \leq F< d/2.$
We prove the following hypothesis by induction on $k$.
\vskip.2cm
\noindent
{\bf Hypothesis for $k$.} {\it The number of directions with wrong line sums that we have already detected equals $r_{k-1}.$ The remaining directions form a set $R_k$ such that if there is a wrong line sum in direction $D_h \in R_k$, then there are at least $k$ wrong line sums in direction $D_h$ and for each direction $D_h \in R_k$,
\begin{equation*}
    \sum_{t \in \mathbb{Z} }t^{j} \ell_{h,t} = \sum_{t \in \mathbb{Z} }t^{j} \ell^*_{h,t}
\end{equation*}
for $j=0$, $1$, $\dots$, $k-2$.}
\vskip.2cm
First we treat the case $k=2$.
We consider the sums $\ell^*_h$ of the line sums $\ell^*_{h,t}$ in each direction $D_h$. Since there are at most $G<d/2$ directions with wrong line sums, the majority of directions has the same correct value $\ell := \sum_{t \in \mathbb{Z}} \ell_{h,t}$ which is the sum of all $f$-values and therefore independent of $h$. We set the, $r_1$ say, directions which have a different sum of line sums apart. We continue with the set $R_2$ of the other $d-r_1$ directions.
Observe that the directions in $R_2$ may have wrong line sums too, but that then in such a direction there are at least two errors, because the sum of the line sums is correct. Thus the Hypothesis holds for $k=2$.

\medskip
Suppose the hypothesis is true for $k$ with $2 \leq k < G$. It follows that the number of directions with wrong line sums in $R_k$ is at most
\begin{equation}
\label{errcount}
    (G-r_{k-1})/k < (d-2r_{k-1})/(2k).
\end{equation}
Hence all directions in $R_k$ have correct line sums if $d-2r_{k-1} \leq 2k$ and if this inequality holds, the induction hypothesis is true for $k+1$. In the sequel we assume
\begin{equation}
\label{dk}
    d-2r_{k-1} \geq 2k+1.
\end{equation}
It follows that $|R_k| = d-r_{k-1} \geq 2k+1$. By renumbering the directions we may assume $D_1, D_2, \dots$, $D_k \in R_k$.
For $h  \in\{1,2, \dots,k\}$, $H>k$  and $K_{k,H}:=\{1,2,\dots, k, H\}$ we define and compute
\begin{equation} \label{cH}
    c^*_{k,H} = \sum_{h=1}^k  E_{h,K_{k,H}} \sum_{t \in \mathbb{Z}} t^{k-1}\ell^*_{h,t} + E_{H,K_{k,H}} \sum_{t \in \mathbb{Z}} t^{k-1}\ell^*_{H,t}.
\end{equation}
From \eqref{casek} with $j=k$ and  \eqref{cH} we obtain, for all $D_H \in R_k, H>k$, similarly to \eqref{diffk},
\begin{equation}
\label{diffk2}
    \sum_{h=1}^k  E_{h,K_{k,H}} \sum_{t \in \mathbb{Z}} t^{k-1}(\ell^*_{h,t}-\ell_{h,t}) + E_{H,K_{k,H}} \sum_{t \in \mathbb{Z}} t^{k-1}(\ell^*_{H,t} -\ell_{H,t})= c^*_{k,H}.
\end{equation}

We distinguish between the following two cases:
\begin{enumerate}[label=\Alph*)]
\item More than $(G-r_{k-1})/k$ directions $D_H \in R_k, H>k$ satisfy $c^*_{k,H} \not = 0$.
\item At most $(G-r_{k-1})/k$ directions $D_H \in R_k, H>k$ satisfy $c^*_{k,H} \not = 0$.
\end{enumerate}

\noindent Case A) Because of the induction hypothesis the number of $H>k$ for which the direction $D_H$ contains a wrong line sum does not exceed $(G-r_{k-1})/k$. Therefore there are at most $(G-r_{k-1})/k$ indices $H>k$ with $D_H \in R_k$ and $E_{H,K_{k,H}} \sum_{t \in \mathbb{Z}} t^{k-1}(\ell^*_{H,t} -\ell_{H,t}) \not= 0$. It follows that there is at least one direction $D_H$ with $ \sum_{h=1}^k  E_{h,K_{k,H}} \sum_{t \in \mathbb{Z}} t^{k-1}(\ell^*_{h,t}-\ell_{h,t}) \not= 0$. This implies that there is an $h \in \{1,2, \dots, k\}$ such that $\sum_{t \in \mathbb{Z}} t^{k-1}(\ell^*_{h,t}-\ell_{h,t}) \not= 0$. Thus there is a direction with a wrong line sum among $D_1, D_2, \dots, D_k$.

\vskip.2cm
\noindent Case B) At least $d-k-r_{k-1}-(G - r_{k-1})/k$ directions $D_H \in R_k$ with $H>k$ have no wrong line sum, hence satisfy $\ell^*_{H,t} = \ell_{H,t}$ for all $t$.
We have, by \eqref{dk},
 $$d-k-r_{k-1} - \frac{d/2 - r_{k-1}}{k} = \frac d2 + \left( \frac d2 -r_{k-1}\right) \left( 1 - \frac 1k \right) - k \geq \frac d2 +\frac 12  -1 - \frac{1}{2k} \geq \frac {d-1}{2} - 1 \geq k-1.$$
Since $k < G < d/2$, at least $k$ directions $d_H \in R_k$ with $H>k$ have no wrong line sums, hence satisfy $\ell^*_{H,t} = \ell_{H,t}$ for all $t$.
Let $D_{k+1}, D_{k+2}, \dots,$ $ D_{2k}$ be directions in $R_k$ with \\$\sum_{t \in \mathbb{Z}} t^{k-1}(\ell^*_{H,t} -\ell_{H,t})= 0$ for all $t$.
Then we have, by \eqref{diffk}, for $H \in \{k+1, k+2, \dots, 2k\}$,
\begin{equation}
\label{diffB}
    \sum_{h=1}^k E_{h,K_{k,H}} \sum_{t \in \mathbb{Z}} t^{k-1}(\ell^*_{h,t}-\ell_{h,t}) = c^*_H=0.
\end{equation}
Here we consider $E_{h,K_{k,H}}$ as coefficients and $\sum_{t \in \mathbb{Z}} t^{k-1}(\ell^*_{h,t}-\ell_{h,t}) $ as unknowns. The coefficient matrix has as typical element
\begin{equation*}
    E_{h,K_{k,H}} =  (-1)^{h-1} \prod_{i,j \in \{1,2, \dots, k,H\},~ i <j,~ i,j \not= h} (a_ib_j - a_jb_i).
\end{equation*}
We claim that the corresponding determinant is nonzero. Observe that the $h$-th column has a nonzero factor
\begin{equation*}
    (-1)^{h-1} \prod_{i,j \in \{1,2, \dots, k\},~ i <j,~ i,j \not= h} (a_ib_j - a_jb_i)
\end{equation*}
in common. By dividing it out for $h=1$, $2$, $\dots$, $k$ the coefficient $E_{h,K_{k,H}}$ reduces to
\begin{equation*}
    E^*_{h,K_{k,H}} := \prod_{i \in \{1,2, \dots, k\},~ i \not= h} (a_ib_H - a_Hb_i).
\end{equation*}
It follows from Lemma \ref{determ} that the determinant of the matrix with typical entry $E^*_{h,D_{k,H}}$ equals
\begin{equation*}
    (-1)^{k(k-1)/2} \left( \prod_{1\leq h_1 <h_2 \leq k} (a_{h_1}b_{h_2} - a_{h_2}b_{h_1}) \right) \left(\prod_{ k+1 \leq H_1  <H_2 \leq 2k} (a_{H_1}b_{H_2} - a_{H_2}b_{H_1}) \right).
\end{equation*}
Since this expression is nonzero, the system \eqref{diffB} has the unique solution $\sum_{t \in \mathbb{Z}} t^{k-1}(\ell^*_{h,t}-\ell_{h,t}) = 0$ for $h = 1,2, \dots, k$.
\vskip.2cm
By comparing the cases A and B we see that $C^*_{k,H} \not= 0$ for at most $(G-r_{k-1})/k$ directions $D_H \in R_k$ with $H>k$ if and only if $\sum_{t \in \mathbb{Z}} t^{k-1}(\ell^*_{h,t}-\ell_{h,t}) = 0$ for $h = 1,2, \dots, k$. Split the $d-r_{k-1}$ directions in $R_k$ into subsets of $k$ elements and a remainder subset of $<k$ elements. Then we have more than $(d-r_{k-1})/k-1$ $k$-subsets. Among them at most $(G-r_{k-1})/k < (d/2 -r_{k-1})/k$ have a direction with a wrong line sum. Since, by \eqref{dk},
$$ \frac{d-r_{k-1}}{k} -1 - \frac{d - 2r_{k-1}}{2k} = \frac{d}{2k}-1  > 0,$$
we see that there is at least one $k$-subset without wrong line sums. Renumber the directions such that this $k$-subset is $\{D_1, D_2, \dots, D_k\}$. Then it follows as in \eqref{dirH} that
$$ E_{H,D_H} \sum_{t \in \mathbb{Z}} t^{k-1}(\ell^*_{H,t} -\ell_{H,t})= c^*_{k,H} $$
for $H>k$ with $D_H \in R_k$.
We define the set $R_{k+1}$ as the set of directions $d_1$, $d_2$, $\dots$, $d_k$ together with the directions $d_H,H>k$ for which $c^*_{k,H}=0$ and define $r_{k}$ as $d - |R_k|$.
For all $d_h \in R_{k+1}$ we have $\sum_{t \in \mathbb{Z}} t^{k-1}(\ell^*_{h,t} -\ell_{h,t})= 0$.
By the induction hypothesis this is also true for the lower powers of $t$. Therefore we have the system of equations
$\sum_{t \in \mathbb{Z}} t^{j}(\ell^*_{h,t}-\ell_{h,t}) = 0$
for $h=1$, $2$, $\dots$, $k$; $j=0$, $1$, $\dots$, $k-1$. It follows from Lemma \ref{vdmo} that $\ell^*_{h,t}-\ell_{h,t} = 0$ for $h=1$, $2$, $\dots$, $k$, if the number of nonzero terms is at most $k$. Thus we may assume that if there is a direction in $R_{k+1}$ with a wrong line sum, then it has at least $k+1$ wrong line sums. This completes the induction step.
\vskip.2cm

We stop detecting directions with wrong line sums at level $k$ if $r_{k-1}=G$ or $k > F -\rho_g$ where $\rho_g$ denotes the number of already detected wrong line sums. If $r_{k-1}=G$, then a wrong line sum in a direction $D_h \in R_{k}$ would lead to a total of $G+1$ directions with wrong line sums which contradicts the definition of $G$. If $k > F -\rho_g$, then a new direction with wrong line sums would give a total of $\rho_g + k > F$ wrong line sums, contradicting the definition of $F$. If a direction in $R_k$ is detected with wrong line sums, then $\rho_g$ is augmented by $k$.
When we stop, we have found all directions with wrong line sums or the assumptions on $F$ and $G$ are not satisfied.
In the latter case one might try a higher value of $F$ or $G$.
\vskip.2cm
It remains to show how the errors can be found and corrected for every direction which contains wrong line sums.
For this we proceed as in the proof of Theorem 2.2. \hfill $\Box$

\section{An error correction algorithm}
\label{sec8}

In this section, we explicitly describe an algorithm for finding directions which contain wrong line sums, and correcting the wrong line sums. Since there may be relatively few errors in practice, we allow the user to specify the maximum number of errors $F$ which have been made in at most $G$ directions, where $G \leq F < d/2$. If $F,G$ are not chosen, set $F=G= \lfloor{(d-1)/2}\rfloor$. We use $\leftrightarrow$ to denote swapped elements. When the line sums of two directions are swapped, $\ell^*_{i,t} \leftrightarrow \ell^*_{j,t}$, it is implicitly meant that this occurs for all $t$.

The algorithm finds the directions that contain wrong line sums, then the wrong line sums themselves, and therafter uses the correct line sums to repair them. We use the variable $g$ to count the number of detected directions containing an erroneous line sum, and order the directions $D=\{D_1, \dots, D_g, D_{g+1}, \dots, D_d\}$, where $D_i$ contains wrong line sums for $i \leq g$. We denote the total number of already detected wrong line sums by $\rho_g$ and the contribution of $D_g$ to it by $\rho^*_g$. Steps 1-7 of the algorithm find all directions for which the sum of line sums does not match the majority and therefore have a wrong line sum. Steps 8-27 detect directions with at least $k \geq 2$ errors which were not detected yet. Steps 28-40 determine the wrong line sums themselves and correct them.

\medskip
In Step 29 we introduce parameter $S=  F - \rho_g + \rho^*_H$ which is an upper bound for the number of wrong line sums in direction $D_H$, since direction $D_H$ already contributed $\rho^*_H$ to $\rho_g$. In Step 31 we use direction $D_{g+2S}$. Since
\begin{equation} \label{relSF}
g + 2S \leq g +2F -2(r_g-r_H) \leq g + 2F - 2(g-1) = 2F -g +2 \leq 2F + 1 \leq d,
\end{equation}
this value of $S$ is permitted. In Step 33 the exact number $s$ of wrong line sums in direction $D_H$ is determined.

\medskip
The computation in Step 35 may not be exact. This is no problem, since the roots $t_1, t_2, \dots, t_s$ are integers and can be found by rounding.

\medskip
To apply Lemma \ref{thmdir} in Steps 36-39, we use Cramer's rule in the form of the matrix determinant lemma. Let $V = (t_j^{i-1})_{i,j = 1}^s$ be the Vandermonde matrix and fix $i$. Let $u^T = (u_j)_{j=1}^r$, $v^T= (v_j)_{j=1}^r$ be column vectors where $u_j = c_j - t_j^{i-1}$ and $v_j$ is equal to 1 for element $i$, and zero elsewhere. Then we can write $x_i$ as
\begin{equation} \label{relVW}
    x_i = \frac{\det(V+uv^T)}{\det(V)} = 1+v^T V^{-1} u = 1+\sum_{j=1}^r V^{-1}_{i,j} u_j.
\end{equation}
Therefore, we do not need to compute determinants for each $i$. Instead, a Vandermonde inverse matrix is computed once.

\medskip
The algorithm may also work well for values of $F$ and $G$ greater than $d/2$. This depends on the way the errors in the line sums are distributed. If, after all, a function $f^*: A \to R$
has been computed, then an easy check reveals whether the line sums of $f^*$ agree with the measured line sums.

\begin{breakablealgorithm}
  \caption{Line sum error correction}\label{euclid}
  \begin{algorithmic}[1]
    \Require{A finite set of (primitive) directions $D = \{(a_h,b_h) : h=1,2, \dots, d\}$ and (measured) line sums $\ell^*_{h,t}$ in the directions of $D$ of a function $f : A \to \mathbb{R}$ such that $\ell^*_{h,t}$ contains at most $F$ errors in at most $G$ directions where $G \leq F < d/2$ ($F$, $G$ may optionally be specified).}
    \vskip.1cm
    \Ensure{Corrected line sums $\ell_{h,t}$.}
    \vskip.3cm
    \For{$h \gets 1$ {\bf to} $d$} \Comment{Find directions with a wrong line sum}
        \State $\ell^*_h \gets \sum\limits_{t \in \mathbb{Z}} \ell^*_{h,t}$
    \EndFor

    \State $g \gets 0$, $\rho_g \gets 0$
    \For{$h \gets 1$ {\bf to} $d$}
        \If{$\ell^*_h \neq$ median$(\{\ell^*_1, \ell^*_2, \dots, \ell^*_d\})$}
            \State $g \gets g+1;~ \rho_g \gets \rho_g+1, \rho^*_g \gets 1$
            \State $D_g \leftrightarrow D_h;~ \ell^*_{g,t} \leftrightarrow \ell^*_{h,t}$
        \EndIf
    \EndFor

    \State $k \gets 2$ \Comment{Find other directions with $k \geq 2$ wrong line sums}
    \While {$k \leq F -\rho_{g} \textbf{ and } g \leq G$}
        \State $maxDirections \gets \min\left(G-g,\left \lfloor{(F -\rho_g)/k}\right \rfloor \right)$	
        \State $i \gets g-k+1$
        \Repeat
            \State $i \gets i+k$
            \State $count \gets 0$
            \For{$H \gets g+1,\dots,i-1,i+k,\dots,d$}
                \State $K_0 \gets \{i,\dots ,i+k-1,H\}$
                \State $c_H \gets \sum\limits_{h=1}^k  E_{g+h,K_0} \sum\limits_{t \in \mathbb{Z}} t^{k-1}\ell^*_{h+i-1,t} + E_{H,K_0} \sum\limits_{t \in \mathbb{Z}} t^{k-1}\ell^*_{H,t}$ \Comment{cf.\eqref{case*}}
                \If{$c_H \neq 0$}
                    \State $count \gets count+1$
                    \If{$count > maxDirections$}
                        \State break
                    \EndIf
                \EndIf
            \EndFor
        \Until{$count \leq maxDirections$}

        \For{$H \gets g+1,\dots,i-1,i+k,\dots,d$}
            \If{$c_H \neq 0$}
                \State $g \gets g+1;~ \rho_g \gets \rho_{g-1}+k; ~\rho^*_g \gets k$
                \State $D_g \leftrightarrow D_H;~ \ell^*_{g,t} \leftrightarrow \ell^*_{H,t}$

            \EndIf
        \EndFor
        \State $k \gets k+1$
    \EndWhile

    \For{$H \gets 1$ {\bf to} $g$}  \Comment{Correct errors in direction $D_H$}
        \State $S \gets F -  \rho_g +\rho^*_H$  \Comment{An upper bound for the number of wrong line sums in $D_H$}

        \For{$j \gets 1$ {\bf to} $2S$}
            \State $K_0 \gets \{g+1,\dots,g+j,H\}$
            \State $c_j \gets \sum\limits_{t \in \mathbb{Z}} t^{j-1} \ell^*_{H,t} + \sum\limits_{h=1}^{j} \dfrac{E_{g+h,K_0}}{E_{H,K_0}} \sum\limits_{t \in \mathbb{Z}} t^{j-1} \ell^*_{g+h,t}$ \Comment{cf. \eqref{case*}}
        \EndFor  \vspace*{3mm}

        \State $s \gets $ rank $ \begin{bmatrix}[lllll]
                        c_{1} & c_{2} & c_{3} & \cdots & c_{S} \\
                        c_{2} & c_{3} & c_{4} & \cdots & c_{S+1} \\
                        c_{3} & c_{4} & c_{5} & \cdots & c_{S+2} \\
                        \multicolumn{1}{c}{\vdots} & \multicolumn{1}{c}{\vdots} & \multicolumn{1}{c}{\vdots} & \ddots & \multicolumn{1}{c}{\vdots} \\
                        c_{S} & c_{S+1} & c_{S+2} & \cdots & c_{2S-1}
                        \end{bmatrix}$

        \State $\begin{bmatrix}			
                B_1\\B_{2}\\\vdots\\B_s
                \end{bmatrix} \gets
                \begin{bmatrix}[llll]
                c_s & -c_{s-1} & \cdots & (-1)^{s-1}c_1 \\
                c_{s+1} & -c_s & \cdots & (-1)^{s-1}c_2 \\
                \multicolumn{1}{c}{\vdots} & \multicolumn{1}{c}{\vdots} & \ddots & \multicolumn{1}{c}{\vdots} \\
                c_{2s-1} & -c_{2s-2} & \cdots & (-1)^{s-1}c_s \\
                \end{bmatrix}^{-1}
                \begin{bmatrix}[l]
                c_{s+1} \\ c_{s+2} \\ \multicolumn{1}{c}{\vdots} \\ c_{2s} \\
                \end{bmatrix}$  \Comment{cf. \eqref{relcB}}\vspace*{3mm}

        \State $t_1,\dots, t_s \gets$ roots($z^s-B_1z^{s-1}+B_2 z^{s-2} - \dots +(-1)^s B_s$) \Comment{cf. \eqref{defB}}

        \State $V \gets (t_j^{i-1})_{i,j=1}^s$
        \State $W \gets V^{-1}$ \Comment{cf. \eqref{relVW}}

        \For{$i \gets 1$ {\bf to} $s$}
            \State $\ell_{H,t_i} \gets \ell^*_{H,t_i} - \sum\limits_{j=1}^s W_{i,j} c_{j}$
        \EndFor

        \State $\rho_g \gets \rho_g + s -\rho^*_H$  \Comment{Total of detected errors}
    \EndFor
\end{algorithmic}
\end{breakablealgorithm}


\section{An example}
\label{sec9}
To illustrate the algorithm we give an example. To show the various aspects of the algorithm, we choose $d=16, F=7, G=4$. Let the directions be given by $D = \{D_1, D_2, \dots, D_{16}\}$ with $D_1=(1,0)$, $D_2=(0,1)$, $D_3=(1,1)$, $D_4=(1,-1)$, $D_5=(2,1)$, $D_6=(2,-1)$, $D_7=(1,2)$, $D_8=(1,-2)$ and eight other directions, e.g. $D_9=(3,1)$, $D_{10}=(3,-1)$, $D_{11}=(1,3)$, $D_{12}=(1,-3)$, $D_{13}=(3,2)$, $D_{14}=(3,-2)$, $D_{15}=(2,3)$, $D_{16}=(2,-3)$. For reason of transparency we assume that all the correct line sums are $0$.
We suppose that there are seven wrong line sums in three directions, $\ell^*_{3,0} = -3$, $\ell^*_{3,4}=3$, $\ell^*_{6,-6}=-2$, $\ell^*_{6,1} = 1$, $\ell^*_{8,3} = 2$, $\ell^*_{8,5} = -4$, $\ell^*_{8,7} =2$. Thus we have two wrong line sums in direction $(1,1)$, two in direction $(2,-1)$ and three in direction $(1,-2)$. We indicate the effects of the steps of the algorithm. Comments are given within square brackets.
\\
\\
\textbf{Steps 4-7.} [Selection of directions with deviant sum of line sums. Exchange of $D_1$ and $D_6$.]\\
median$(\{\ell^*_1, \ell^*_2, \dots, \ell^*_d\})$ = 0 = $\ell^*_h$ for all $\ell^*_h$'s except for $\ell^*_6=-1$, $g \gets 1$, $\rho_1 \gets 1$, $\rho^*_1 \gets 1$, $D_1 \gets (2,-1)$, $D_6 \gets (1,0)$, $\ell^*_{1,-6} \gets -2$, $\ell^*_{1,1} \gets 1$, $\ell^*_{6,-6} \gets \ell^*_{6,1} \gets 0$.
\\
\\
\textbf{Steps 8-21.} [$k=2$, first try. Case A. This will fail since the test directions, $D_2$ and $D_3$, are assumed to have correct line sums, but $D_3$ has wrong line sums. See \eqref{defE} for $E$.] \\$k \gets 2$, {\it maxDirections} $\gets 3$, $i \gets 0$, $i \gets 2$, {\it count} $\gets 0$, $H \gets 4$, $K_0 \gets \{2,3,4\}$, $c_4 \gets  12$,\\ {\it count} $ \gets  1$, $H \gets 5$, $K_0 \gets \{2,3,5\}$, $c_5 \gets  24$, {\it count} $\gets  2$, $H \gets 6$, $K_0 \gets \{2,3,6\}$, $c_6 \gets 12$, \\{\it count} $\gets 3,$ $H \gets 7$, $K_0 \gets \{2,3,7\}$, $c_7 \gets 12$, {\it count} $\gets 4$, break.
\\
\\
\textbf{Steps 12-27.} [$k=2$, second try, Case B. This succeeds since the test directions, $D_4$ and $D_5$, have correct line sums. The double error in $D_3$ will be detected. Exchange of $D_2$ and $D_3$. \\The triple error in $D_8$ will not be detected, since $\sum_t \ell^*_{8,t} = \sum_t t\ell^*_{8,t} = 0$.] \\$i \gets 4$, {\it count} $\gets 0$, $H \gets 2$, $K_0 \gets \{4,5,2\}$, $c_2 \gets 0$, $H \gets 3$, $K_0 \gets \{4,5,3\}$, $c_3 \gets 36$, {\it count} $\gets 1$, $c_H \gets 0$ for $H=6,7, ..., 16$, $H \gets 3$,
 $g \gets 2$, $\rho_2 \gets 3$, $\rho^*_2 \gets 2$, $D_2 \gets (1,1)$, $D_3 \gets (0,1)$, $\ell^*_{2,0} \gets -3$, $\ell^*_{2,4} \gets 3$, $\ell^*_{3,0} \gets \ell^*_{3,4} \gets 0$, $k \gets 3$.
 \\
\\
\textbf{Steps 9-27.} [$k=3$. Since the test directions $D_3, D_4, D_5$ have only correct line sums, this is Case B and the triple error in $D_8$ will be detected. Exchange of $D_8$ and $D_3$.]\\
{\it maxDirections} $\gets 1$, $i \gets 0$, $i \gets 3$, {\it count} $\gets 0$, $K_0 \gets \{3,4,5,H\}$, $c_6 \gets c_7 \gets 0$, $c_8 \gets  -96$, {\it count} $\gets 1$, $c_i \gets 0$ for $i=9, 10, \dots, 16$, $g \gets 3$, $\rho_3 \gets 6$, $\rho^*_3 \gets 3$, $D_3 \gets (1,-2)$, $D_8 \gets (0,1)$, $\ell^*_{3,3} \gets 2$, $\ell^*_{3,5} \gets -4$, $\ell^*_{3,7} \gets 2$, $\ell^*_{8,3} \gets \ell^*_{8,5} \gets
\ell^*_{8,7} \gets 0$, $k \gets 4$.
\vskip.2cm
\noindent [Condition 9 on $F$ is no longer satisfied. The directions with wrong line sums have been detected: $(2,-1), (1,1), (1,-2)$, now $D_1, D_2, D_3$.]
\\
\\
\textbf{Steps 28-40:} [Correction of the line sums for direction  $(2,-1)$. Note that since $\ell^*_{g+h,t} =0$ for $g+h>3$, as in \eqref{dirH}, $c_j$ reduces to $c_j= \sum_{t \in \mathbb{Z}}t^{j-1}\ell^*_{H,t}$.] \\ $H \gets 1$, $S \gets 2$, $j \gets 1$, $K_0 \gets \{4,1\}$, $c_1 \gets -1$, $j \gets 2$, $K_0 \gets \{4,5,1\}$, $c_2 \gets 11$, \\$j \gets 3$, $K_0 \gets \{4,5,6,1\}$, $c_3 \gets -71$, $K_0 \gets \{4,5,6,7,1\}$, $c_4 \gets 431$, \\$s \gets 2$, $B_1 \gets -7$, $B_2 \gets 6$, $t_1 \gets -6$, $t_2 \gets  -1$, $V \gets (1,1 ; -6, -1)$, $W \gets \frac 15 (-1, -1 ; 6, 1)$, $\ell_{1,-6} \gets \ell_{1,1} \gets 0$, $\rho_g \gets 7$.
\\
\\
\textbf{Steps 28-40.} [Correction of the line sums for direction $(1,1)$.] \\ $H \gets 2$,  $S \gets 2$, $c_1 \gets 0$, $c_2 \gets 12$, $c_3 \gets 48$, $c_4 \gets 192$,
$s \gets 2$, $B_1 \gets 4, B_2 \gets 0$, $t_1 \gets 0$, $t_2 \gets 4$, $V \gets (1,1; 0,4)$, $W \gets \frac 14(4, -1; 0, 1)$, $\ell_{2,0} \gets 0$, $\ell_{2,4} \gets 0$, $\rho_g \gets 7$.
\\
\\
\textbf{Steps 28-40.} [Correction of the line sums for direction $(1,-2)$.] \\ $H \gets 3$, $S \gets 3$, $c_1 \gets 0$, $c_2 \gets 0$, $c_3 \gets 16$, $c_4 \gets -240$, $c_5 \gets 2464; c_6 \gets -21600,$ $s \gets 3$, $B_1 \gets -15, B_2 \gets 71, B_3 \gets -105$, $t_1 \gets -7, t_2 \gets -5, t_3 \gets -3$, $V \gets (1,1,1; -7, -5, -3; 49, 25, 9)$, \\$W \gets \frac 18 (15, 8, 1; -42, -20, -2; 35, 12, 1)$, $\ell_{3,-7} \gets \ell_{3,-5} \gets \ell_{3,-3} \gets 0$, $\rho_g \gets 7$.

\vskip.2cm
\noindent [All the wrong line sums have been detected and corrected. After all, it can be checked whether a correct solution has been found indeed by computing the new line sums. If not, the number of wrong line sums exceeded $F$ or the number of directions with wrong line sums exceeded $G$.]

\section{Complexity}
\label{sec10}

In order to compute the complexity of the above algorithm we make some preliminary observations. If there is an $h$ such that $a_h \geq m$ or $|b_h| \geq n$, then each line sum in direction $(a_h,b_h)$ is the $f^*$-value of exactly one point. Without loss of generality we may then assume that $a_h = m$ or $|b_h| = n$, respectively. Hence, for each $h$ the value of $|t|$ in \eqref{defm} is at most $2mn$ and the number of directions $d$ does not exceed $mn$. We further use that $g \leq G \leq F < d/2, h,H \leq d$ and $k \leq G$.

\medskip
In our complexity computation we count an addition, subtraction, multiplication, division and a comparison of two values as one operation. When computing the complexity we do not take into account the size of the terms. (This can be quite high because of the factors $t^j$. )
An operation may therefore mean a multi-precision operation. We assume that the numbers $t^j$ for $0 \leq j <2d-1, |t| \leq 2mn$ are computed once. This involves $\mathcal{O}(dmn)$ operations.

The numbers $t_1$, $t_2$, $\dots$, $t_s$ which are computed in Step 35 of the algorithm are the numbers $t$ indicating the lines of the wrong line sums in direction $H$.  By checking for each integer $\leq 2mn$ with Horner's method whether it is a zero of the polynomial, Step 35 takes $\mathcal{O}(dmn)$ operations.

\medskip
In our analysis, we follow the steps of the pseudocode given in Section \ref{sec8}.
\begin{itemize}
\item {\sl Steps 1-2} (by the above remark on the number of line sums) require $\mathcal{O}(dmn)$ operations (additions).
\item {\sl Steps 3-7} altogether need $\mathcal{O}(dmn)$ operations. (By the Floyd-Rivest algorithm the median can be calculated in linear time, see \cite{fr}.)
\item {\sl Steps 9-11, 27} require $\mathcal{O}(F)$ operations,
\item {\sl Steps 12-14, 22} mean $\mathcal{O}(d/k)$ operations,
\item {\sl Steps 15-17} involve $\mathcal{O}(dk^2mn)$ operations. (According to \eqref{relat} the computation of the $E$'s takes $\mathcal{O}(k^2)$ operations; $k$ does not exceed $F$.)
\item {\sl Steps 18-21} take $\mathcal{O}(G)$ operations,
\item {\sl Steps 23-26} involve $\mathcal{O}(dmn)$ operations.
\end{itemize}
By the structure of this block, the complexity of Steps 9-27 is
\begin{equation*}
    O_{9-11,27}O_{12-14,22} (O_{15-21}+O_{23-26})=\mathcal{O}(d^2mnF^2)
\end{equation*}
where $O_i$ denotes the number of operations in Steps $i$.

\begin{itemize}
\item {\sl Step 28}, in view of $g\leq G$, implies $\mathcal{O}(G)$ repetitions,
\item {\sl Step 29} needs $\mathcal{O}(G)$ additions,
\item {\sl Step 30} implies $\mathcal{O}(F)$ repetitions,
\item {\sl Steps 31-32}, since $j\leq 2F$, require $\mathcal{O}(mnF^3)$ operations,
\item {\sl Step 33} needs $\mathcal{O}(F^2)$ operations,
\item {\sl Step 34}, by Algorithm 2.3.2 on p. 58 of \cite{cohen}, altogether takes $\mathcal{O}(F^3)$ operations,
\item {\sl Step 35}, by Algorithm 2.2.2 on p. 50 of \cite{cohen}, needs $\mathcal{O}(F^3)$ operations,
\item {\sl Step 36}, by an earlier remark, needs $\mathcal{O}(dmn)$ operations,
\item {\sl Steps 37-39} take $\mathcal{O}(F^2)$ operations,
\item {\sl Step 40}, by Algorithm 2.2.2 on p. 50 of \cite{cohen}, requires $\mathcal{O}(F^3)$ operations,
\item {\sl Steps 41-43} need $\mathcal{O}(F^2)$ operations.
\end{itemize}
By the structure of this block, the complexity of Steps 28-43 is given by
\begin{equation*}
    O_{28}(O_{29-32}+O_{33-36}+O_{37-40}+O_{41-43})=\mathcal{O}(mnG(F^3 + d)) = \mathcal{O}(dmnF^2G)
\end{equation*}
where $O_i$ denotes the number of operations implied by the corresponding Steps. Thus the algorithm can  be  completed in $\mathcal{O}(d^2mnF^2)$ operations.


\section{Concluding remarks}
\label{conclusion}

After Ceko, Pagani and Tijdeman \cite{cst} had developed a fast method to reconstruct a consistent discrete tomography problem, the next logical step was to determine what is the most likely set of consistent line sums in case of inconsistency of line sums. If many line sums are almost correct, we refer to Section 4 of \cite{ht13}. In the present paper we study the case that only a small number of line sums is wrong and show how to rectify the wrong line sums. 
We present an algorithm which performs the task in $\mathcal{O}(d^4mn)$ operations. 
However, the numbers involved in an operation may become quite large.

If the domain of $f$ is finite, but not a rectangular grid, then the algorithm can be applied by choosing $A$ as the smallest rectangular grid with sides parallel to the coordinate axes containing the domain and defining function value $0$ for each point of $A$ which does not belong to the domain of $f$. In this way the domain of $f$ is extended to $A$. Hereafter the given algorithm can be used.

An obvious question is whether the reconstruction method for cases with only few wrong line sums can be extended to dimension three and higher. This seems to be hard for two reasons. Firstly a higher dimensional version of Lemma \ref{relat} is wanted in order to be able to detect and correct the wrong line sums. Secondly a three-dimensional version of the algorithm of Ceko, Pagani and Tijdeman is only known under special conditions, see \cite{cpt}. 
The general case might be quite complicated, because it is much more complex to describe the convex hull of the union of all ghosts in dimension $>2$ than in dimension 2.
Thus reconstruction is much more difficult.

Another question is whether it is possible to correct $d/2$ or more errors in the line sums. In a generic case more wrong line sums will be corrected by the algorithm. The example in Section \ref{sec3} shows that it is not always possible to correct the line sums in $d/2$ or more directions, since the $f$-value of one point is uncertain. Theorem 2.2 shows that if the directions with wrong line sums can be detected, per direction quite a few wrong line sums can be corrected.

\subsection*{Acknowledgments}
We thank the referee for his valuable remarks.

The research of L.H. was supported in part by the E\"otv\"os Lor\'and
Research Network (ELKH), by the NKFIH grants 115479, 128088 and 130909 of the Hungarian National Foundation for Scientific Research and by the projects EFOP-3.6.1-16-2016-00022 and EFOP-3.6.2-16-2017-00015, co-financed by the European Union and the European Social Fund.

\end{document}